\documentclass[10pt,reqno]{amsart}
\usepackage{pdfsync}
\usepackage{amssymb,amscd}
\usepackage{color}

\usepackage{graphicx}

\usepackage[bookmarks=true,hyperindex,pdftex,colorlinks,
citecolor=blue,urlcolor=blue]{hyperref}

\hypersetup{pdftitle={The version for compact operators of Lindenstrauss properties A and B},
pdfauthor={Miguel Martin}}

\textwidth=15cm
\oddsidemargin=0.8cm
\evensidemargin=0.8cm
\topmargin=1cm
\parskip=1ex

\theoremstyle{plain}
\newtheorem{theorem}{Theorem}[section]
\newtheorem{proposition}[theorem]{Proposition}
\newtheorem{prop}[theorem]{Proposition}
\newtheorem{corollary}[theorem]{Corollary}
\newtheorem{lemma}[theorem]{Lemma}
\newtheorem{fact}[theorem]{Fact}

\theoremstyle{definition}
\newtheorem{definition}[theorem]{Definition}

\newtheorem{question}[theorem]{Question}
\newtheorem{example}[theorem]{Example}

 \DeclareMathOperator{\Id}{\mathrm{Id}}
 \newcommand{\N}{\mathbb{N}}
  \newcommand{\C}{\mathbb{C}}
    \newcommand{\R}{\mathbb{R}}
  \newcommand{\D}{\mathbb{D}}
 \newcommand{\Ak}{A$^{\!k}$}
  
 \newcommand{\Bk}{B$^k$}

 \renewcommand{\geq}{\geqslant}
 \renewcommand{\leq}{\leqslant}

\begin{document}
\title{The version for compact operators of Lindenstrauss properties A and B}

\author[Miguel Martin]{}

  \thispagestyle{plain}

\thanks{Supported by Spanish MICINN and FEDER project no.\ MTM2012-31755, and by Junta de Andaluc\'{\i}a and FEDER grants FQM-185 and P09-FQM-4911.}

\subjclass[2000]{Primary 46B04; Secondary 46B20, 46B45, 46B28, 47B07}
\keywords{Banach space; norm attaining operators; compact operators; approximation property; strict convexity}

\date{}

\maketitle

\centerline{\textsc{\large Miguel Mart\'{\i}n}}

\begin{center}\small Departamento de An\'{a}lisis Matem\'{a}tico \\ Facultad de
Ciencias \\ Universidad de Granada \\ 18071 Granada, Spain \\
\emph{E-mail:} \texttt{mmartins@ugr.es} \\ \href{http://orcid.org/0000-0003-4502-798X}{ORCID: \texttt{0000-0003-4502-798X} }
\end{center}

\begin{abstract}
It has been very recently discovered that there are compact linear operators between Banach spaces which cannot be approximated by norm attaining operators. The aim of this expository paper is to give an overview of those examples and also of sufficient conditions ensuring that compact linear operators can be approximated by norm attaining operators. To do so, we introduce the analogues for compact operators of Lindenstrauss properties A and B.
\end{abstract}

\section{Introduction}
The study of norm attaining operators started with a celebrated paper by J.~Lindenstrauss of 1963 \cite{Lindens}. There, he provided examples of pairs of Banach spaces such that there are (bounded linear) operators between them which cannot be approximated by norm attaining operators. Also, sufficient conditions on the domain space or on the range space providing the density of norm attaining operators were given. We recall that an operator $T$ between two Banach spaces $X$ and $Y$ is said to \emph{attain its norm} whenever there is $x\in X$ with $\|x\|=1$ such that $\|T\|=\|T(x)\|$ (that is, the supremum defining the operator norm is actually a maximum).

Very recently, it has been shown that there exist compact linear operators between Banach spaces which cannot be approximated by norm attaining operators \cite{Martin-KNA}, solving a question that has remained open since the 1970s. We recall that an operator between Banach spaces is \emph{compact} if it carries bounded sets into relatively compact sets or, equivalently, if the closure of the image of the unit ball is compact. After the cited result of \cite{Martin-KNA}, it makes more sense to discuss sufficient conditions on the domain or on the range space ensuring density of norm attaining compact operators. The objective of this expository paper is to present the give a picture of old and new developments in this topic.

Let us first make some remarks about the existence of compact operators which do not attain their norm. It is clear that if the domain of a linear operator is a finite-dimensional space, then the image by it of the unit ball is actually compact and, therefore, the operator automatically attains its norm. This argument extends to infinite-dimensional reflexive spaces. Indeed, if $X$ is a reflexive space, every compact operator from $X$ into a Banach space $Y$ is completely continuous (i.e.\ it maps weakly convergent sequences into norm convergent sequences, see \cite[Problem 30 in p.~515]{Dun-Sch} for instance) and so the weak (sequential) compactness of the unit ball of $X$ gives easily the result. We refer to \cite[Theorem~6 in p.~16]{DiesGeom} for a discussion on when all bounded linear operators between reflexive spaces attain their norm. On the other hand, for every non-reflexive Banach space $X$ there is a continuous linear functional on $X$ which does not attain its norm (James' theorem, see \cite[Theorem~2 in p.~7]{DiesGeom} for instance). The multiplication of such functional by a fix non-zero vector of a Banach space $Y$ clearly produces a rank-one (hence compact) operator from $X$ into $Y$ which does not attain its norm. As a final remark, let us comment that for every infinite-dimensional Banach space $X$, there exists a bounded linear operator $T:X\longrightarrow c_0$ which does not attain its norm \cite[Lemma~2.2]{MarMerPay}.

Let us present a brief account on classical results about density of norm attaining operators. The expository paper \cite{Acosta-RACSAM} can be used for reference and background. We need some notation. Given two (real or complex) Banach spaces $X$ and $Y$, we write $L(X,Y)$ to denote the Banach space of all bounded linear operators from $X$ into $Y$, endowed with the operator norm. By $K(X,Y)$ and $F(X,Y)$ we denote the subspaces of $L(X,Y)$ of compact operators and finite-rank operators, respectively. We write $X^*$ for the (topological) dual of $X$, $B_X$ for the closed unit ball, and $S_X$ for the unit sphere. The set of norm attaining operators from $X$ into $Y$ is denoted by $NA(X,Y)$. The study on norm attaining operators started as a negative answer by J.~Lindenstrauss \cite{Lindens} to the question of whether it is possible to extend to the vector valued case the classical Bishop-Phelps theorem of 1961 \cite{BishopPhelps} stating that the set of norm attaining functionals is always dense in the dual of a Banach space. J.~Lindenstrauss introduced two properties to study norm attaining operators: a Banach space $X$ (resp.\ $Y$) has Lindenstrauss \emph{property A} (resp.\ \emph{property B}) if $NA(X,Z)$ is dense in $L(X,Z)$ (resp.\ $NA(Z,Y)$ is dense in $L(Z,Y)$) for every Banach space $Z$. It is shown in \cite{Lindens}, for instance, that $c_0$, $C[0,1]$ and $L_1[0,1]$ fail property $A$. Examples of spaces having property A (including reflexive spaces and $\ell_1$) and of spaces having property B (including $c_0$, $\ell_\infty$ and every finite-dimensional space whose unit ball is a polyhedron) are also shown in that paper. There are many extensions of Lindenstrauss results from which we will comment only a representative sample. With respect to property A, J.~Bourgain showed in 1977 that every Banach space with the Radon-Nikod\'{y}m property have property A and that, conversely, if a Banach space $X$ has property A in every equivalent norm, then it has the Radon-Nikod\'{y}m property (this direction needs a refinement due to R.~Huff, 1980). W.~Schachermayer (1983) and B.~Godun and S.~Troyanski (1993) showed that ``almost'' every Banach space can be equivalently renormed to have property A. With respect to property B, J.~Partington proved that every Banach space can be renormed to have property B (1982) and W.~Schachermayer showed that $C[0,1]$ fails the property (1983). W.~Gowers showed in 1990 that $\ell_p$ does not have property B for $1<p<\infty$, a result extended by M.~Acosta (1999) to all infinite-dimensional strictly convex Banach spaces and to infinite-dimensional $L_1(\mu)$ spaces. With respect to pairs of classical Banach spaces not covered by the above results, J.~Johnson and J.~Wolfe (1979) proved that, in the real case, $NA(C(K),C(S))$ is dense in $L(C(K),C(S))$ for all compact spaces $K$ and $S$, and C.~Finet and R.~Pay\'{a} (1998) showed the same result for the pair $(L_1[0,1],L_\infty[0,1])$. Concerning the study of norm attaining compact operators, J.~Diestel and J.~Uhl (1976) \cite{Diestel-Uhl-Rocky} showed that norm attaining finite-rank operators from $L_1(\mu)$ into any Banach space are dense in the space of all compact operators. This study was continued by J.~Johnson and J.~Wolfe \cite{JoWo} (1979), who proved the same result when the domain space is a $C(K)$ space or the range space is an $L_1$-space (only real case) or a predual of an $L_1$-space. In 2013, B.~Cascales, A.~Guirao, and V.~Kadets \cite[Theorem~3.6]{CGK} showed that for every uniform algebra (in particular, the disk algebra $A(\D)$), the set of norm attaining compact operators arriving to the algebra is dense in the set of all compact operators.

As we already mention, we would like to review here known results about density of norm attaining compact operators. As this question is too general, and imitating what Lindenstrauss did in 1963, we introduce the following two properties.

\begin{definition} Let $X$, $Y$ be Banach spaces.
\begin{enumerate}
\item[(a)] $X$ is said to satisfy \emph{property \Ak} if $K(X,Z)\cap NA(X,Z)$ is dense in $K(X,Z)$ for every Banach space $Z$.
\item[(b)] $Y$ is said to satisfy \emph{property \Bk} if $K(Z,Y)\cap NA(Z,Y)$ is dense in $K(Z,Y)$ for every Banach space $Z$.
\end{enumerate}
\end{definition}

Our first observation is that it is not clear whether, in general, property A implies property \Ak\, or property B implies property \Bk.

\begin{question}\label{question:A=>Ak}
Does Lindenstrauss property A imply property \Ak?
\end{question}

\begin{question}\label{question:B=>Bk}
Does Lindenstrauss property B imply property \Bk?
\end{question}

Nevertheless, all sufficient conditions for Lindenstrauss properties A and B listed above also implies, respectively, properties \Ak\ and \Bk. This is so because the usual way of establishing the density of norm attaining operators is by proving that every operator can be approximated by compact perturbations of it attaining the norm.

Besides of these examples, most of the known results which are specific for compact operators depend on some stronger forms of the approximation property. Let us recall the basic principles of this concept. We refer to the classical book \cite{Linden-Tz} for background and to \cite{Casazza} for a more updated account. A Banach space $X$ has the (Grothendieck) \emph{approximation property} if for every compact set $K$ and every $\varepsilon>0$, there is $R\in F(X,X)$ such that $\|x-R(x)\| <\varepsilon$ for all $x\in K$. Useful and classical results about the approximation theory are the following.

\begin{prop}\label{prop-AP}
Let $X$, $Y$ be Banach spaces.
\begin{enumerate}
\item[a)] $Y$ has the approximation property if and only if $\overline{F(Z,Y)}=K(Z,Y)$ for every Banach space $Z$.
\item[b)] $X^*$ has the approximation property if and only if $\overline{F(X,Z)}=K(X,Z)$ for every Banach space $Z$.
\item[c)] If $X^*$ has the approximation property, then so does $X$.
\item[d)] (Enflo) There exist Banach spaces without the approximation property. Actually, there are closed subspaces of $c_0$ failing the approximation property.
\end{enumerate}
\end{prop}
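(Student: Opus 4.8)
The plan is to regard items (a), (b) and (c) as the classical circle of theorems of Grothendieck relating the approximation property with the density of finite-rank operators inside the space of compact operators, and to deduce (d) from Enflo's example. I would prove in full only the short implications of (a) and the passages between the items, quoting \cite{Linden-Tz} and \cite{Casazza} for the substantial part.

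For (a), the implication ``$Y$ has the approximation property $\Longrightarrow$ $\overline{F(Z,Y)}=K(Z,Y)$ for every $Z$'' I would give at once: if $T\in K(Z,Y)$ then $\mathcal K:=\overline{T(B_Z)}$ is compact, so for $\varepsilon>0$ the approximation property yields $S\in F(Y,Y)$ with $\|y-S(y)\|<\varepsilon$ for all $y\in\mathcal K$, and then $S\circ T\in F(Z,Y)$ satisfies $\|T-S\circ T\|=\sup_{z\in B_Z}\|T(z)-S(T(z))\|\leq\sup_{y\in\mathcal K}\|y-S(y)\|<\varepsilon$. For the converse I would rely on Grothendieck's lemma that every compact subset of $Y$ lies in the closed absolutely convex hull of a norm-null sequence $(y_n)$, realize such a sequence through the compact operator $T\colon\ell_1\longrightarrow Y$ with $T(e_n)=y_n$ (so that $\overline{T(B_{\ell_1})}$ contains the prescribed compact set), use the hypothesis to approximate $T$ in norm by finite-rank operators, and then manufacture from such an approximant a single $S\in F(Y,Y)$ close to the identity on the prescribed compact set. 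This last construction is, I expect, the genuine difficulty of the whole proposition: one must recover the ``intrinsic'' form of the approximation property (a single operator on $Y$, uniformly on a compact set) from its operator-theoretic reformulation, and the naive attempt runs into an error amplified by the norm of the operator being built, so some care -- or the trace-duality/tensor-product machinery -- is needed. Here I would reproduce Grothendieck's argument or refer to \cite{Linden-Tz} and \cite{Casazza}.

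Item (b) I would obtain from (a) by passing to adjoints. Schauder's theorem says that an operator is compact exactly when its adjoint is, and $R\mapsto R^*$ embeds $L(X,Z)$ isometrically into $L(Z^*,X^*)$ with image the weak$^*$-to-weak$^*$ continuous operators; this makes the implication ``$\overline{F(X,Z)}=K(X,Z)$ for all $Z$ $\Longrightarrow$ $X^*$ has the approximation property'' essentially formal once (a) is applied with range space $X^*$ (the routine point being that a finite-rank operator $X\to W^*$ is the restriction to $X$ of the adjoint of a finite-rank operator $W\to X^*$, and that the norm of an adjoint is already computed on the space). For the reverse implication, given a compact $T\colon X\longrightarrow Z$ and $\varepsilon>0$, I would apply the approximation property of $X^*$ to the compact set $\overline{T^*(B_{Z^*})}$ to find $\Phi\in F(X^*,X^*)$ with $\|\Phi\circ T^*-T^*\|<\varepsilon$; writing $\Phi=\sum_{i=1}^m a_i\otimes f_i$ with $a_i\in X^{**}$ and $f_i\in X^*$, and using the standard fact that the bidual of the compact operator $T$ maps $X^{**}$ into the canonical copy of $Z$, so that $T^{**}(a_i)$ corresponds to some $z_i\in Z$, one checks that $\Phi\circ T^*$ is exactly the adjoint of $S:=\sum_{i=1}^m f_i\otimes z_i\in F(X,Z)$, whence $\|T-S\|=\|T^*-\Phi\circ T^*\|<\varepsilon$. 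Part (c), that the approximation property passes from $X^*$ to $X$, is a further theorem of Grothendieck that I would simply quote from \cite{Linden-Tz}, noting that its converse fails.

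Finally, item (d) is Enflo's celebrated negative solution of the approximation problem, and the strengthening that the counterexample can be found among the closed subspaces of $c_0$ (and of $\ell_p$ for $p\neq 2$) is due to Davie; for this part I would give no proof and refer to the original papers and to the textbook account in \cite{Linden-Tz}.
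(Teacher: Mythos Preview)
The paper does not prove Proposition~\ref{prop-AP} at all: it is stated as a compendium of ``useful and classical results about the approximation theory'' with the blanket reference ``We refer to the classical book \cite{Linden-Tz} for background and to \cite{Casazza} for a more updated account'' given just before the statement, and no proof (or even proof sketch) follows. Your proposal is therefore strictly more detailed than what the paper supplies.

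As for correctness, your outline follows the standard Grothendieck route and is sound. The forward direction of (a) is exactly as you write; for the converse you correctly single out the real difficulty (reconstructing an intrinsic $S\in F(Y,Y)$ from the operator-theoretic hypothesis) and defer to the literature, which is what the paper itself does wholesale. Your argument for the forward direction of (b) is the classical one and the key step---that $T^{**}(X^{**})\subset Z$ for compact $T$, so that $\Phi\circ T^*$ is genuinely the adjoint of a finite-rank $S\colon X\to Z$---is correctly identified. For the reverse direction of (b) your ``essentially formal'' is a bit quick but salvageable: given compact $S\colon W\to X^*$, the restriction $S^*\!\upharpoonright_X\colon X\to W^*$ is compact, the hypothesis approximates it by a finite-rank $R$, and the pre-adjoint construction you allude to recovers a finite-rank $Q\colon W\to X^*$ with $\|S-Q\|=\|S^*\!\upharpoonright_X-R\|$; the norm equality holds because $\|A\|=\sup_{w\in B_W}\sup_{x\in B_X}|A(w)(x)|$ for any $A\colon W\to X^*$. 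Items (c) and (d) are, as you say, quotations.
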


What is the relation between the approximation property and norm attaining compact operators? On the one hand, the negative examples given in \cite{Martin-KNA} exploit the failure of the approximation property of some subspaces of $c_0$ together with an easy extension of a geometrical property proved by Lindenstrauss for $c_0$ (see section~\ref{sec:negative} for details). On the other hand, most of the positive results for properties \Ak\, and \Bk\, which are not related to Lindenstrauss properties A and B use some strong form of the approximation property. Actually, all positive results in this line try to give a partial answer to one of the following two open questions.

\begin{question}\label{question-AP-Bk}
Does the approximation property imply property \Bk?
\end{question}

\begin{question}\label{question-dual-AP-Ak}
Does every Banach space whose dual has the approximation property have property \Ak\,?
\end{question}

By Proposition~\ref{prop-AP}.a, Question~\ref{question-AP-Bk} is equivalent to the following one, which is considered one of the most important open questions in the theory of norm attaining operators.

\begin{question}\label{question-fin-dim-B}
Does every finite-dimensional Banach space have Lindenstrauss property B?
\end{question}

Surprisingly, this question is open even for the two-dimensional Euclidean space.

\vspace*{1ex}

The outline of the paper is as follows. Section~\ref{sec:negative} contains a detailed explanation of the negative examples presented in \cite{Martin-KNA} and some new examples. We also use the techniques to present a couple of negative results on Lindenstrauss properties A and B.

In section~\ref{sec:domain} we collect all results about property \Ak. We start by listing the main known examples of Banach spaces with Lindenstrauss property A since all of them have property \Ak. Next, we show that an stronger version of the approximation property of the dual of a Banach space implies property \Ak\, \cite{JoWo}. Most of the examples of spaces with property \Ak\, given in the literature are actually proved in this way, as $L_1(\mu)$ spaces, $C_0(L)$ spaces. We are even able to produce some new examples as the preduals of $\ell_1$ and $M$-embedded Banach spaces with monotone basis.

The results for range spaces appear in section~\ref{sec:range}. Again, we start with a list of known examples of Banach spaces with Lindenstrauss property B which also have property \Bk. Next, we provide two partial positive answers to Question~\ref{question-AP-Bk} (i.e.\ whether the approximation property implies property \Bk). The first one is very simple: suppose that a Banach space $Y$ has the approximation property and every finite-dimensional subspace of it is contained in another subspace of $Y$ having property \Bk. Then, the whole space $Y$ has property \Bk. The second sufficient condition deals with the existence of a bounded net of projections converging to the identity in the strong operator topology and such that their ranges have property \Bk. These two ideas lead to a list including most of the known examples of Banach spaces with property \Bk: preduals of $L_1(\mu)$ spaces (in particular, $C_0(L)$ spaces), real $L_1(\mu)$ spaces, and polyhedral Banach spaces with the approximation property (in particular, subspaces of $c_0$ with the approximation property, both in the real and in the complex case). Besides these examples, uniform algebras have been very recently proved to have property \Bk\, \cite{CGK}.

We would like to finish this introduction with some comments about the complex case of Bishop-Phelps theorem and its relation to Question~\ref{question-fin-dim-B} for the two dimensional real Hilbert space. First, let us comment that there is a complex version of the Bishop-Phelps theorem, easily deductible from the real case, which states that for every complex Banach space $X$, complex-linear norm attaining functionals from $X$ into $\C$ are dense in $X^*=L(X,\C)$ (see \cite{Phelps1} or \cite[\S 2]{Phelps2}). But this does not imply that $\C$ viewed as the real two-dimensional Hilbert space have Lindenstrauss property B, as it does not allow to work with operators which are not complex-linear. On the other hand, V.~Lomonosov showed in 2000 \cite{Lomonosov} that there is a complex Banach space $X$ and a (non-complex symmetric) closed convex bounded subset $C$ of $X$ such that there is no element in $X^*$ attaining the supremum of its modulus on $C$.

\section{Negative examples}\label{sec:negative}
Our goal here is to present the recent results in \cite{Martin-KNA} providing examples of compact operators which cannot be approximated by norm attaining operators. The key idea is to combine the approximation property with the following simple geometric idea. Let $X$ and $Y$ be Banach spaces and let $T\in L(X,Y)$ with $\|T\|=1$. Suppose that $T$ attains its norm at a point $x_0\in S_X$ which is not an extreme point of $B_X$, let $z\in X$ be such that $\|x_0 \pm z\|\leq 1$ and observe that $\|Tx_0 \pm Tz\|\leq 1$. If $T x_0\in S_Y$ is an extreme point of $B_Y$, then $Tz=0$. Summarizing:
\begin{equation}\label{eq:xpmz}
\|x_0\pm z\| \leq 1 \ \ \text{and $Tx_0$ is an extreme point of $B_Y$} \qquad \Longrightarrow \qquad Tz=0.
\end{equation}
Two remarks are pertinent. First, the most number of vectors $z$'s we may use in the above equation, the most information we get about $T$. Second, to get that $Tx_0\in S_Y$ is an extreme point, the easiest way is to require that all points in $S_Y$ are extreme points of $B_Y$, that is, that $Y$ is \emph{strictly convex}.
\begin{figure}[h]
\centering
\includegraphics[width=10cm]{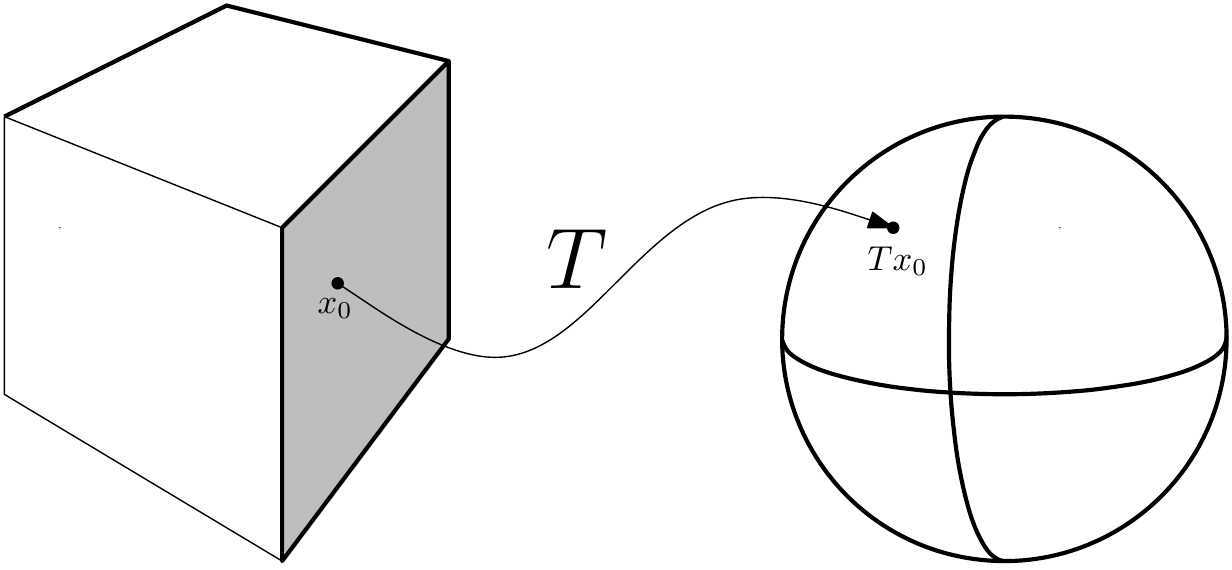}
\caption{An operator attaining its norm into a strictly convex space}
\label{figure}
\end{figure}
Observe that geometrically, this argument means that an operator into a strictly convex Banach space which attains its norm in the interior of a face of the unit sphere carries the whole face to the same point (see Figure~\ref{figure}).

Next, suppose that the set of $z$'s working in \eqref{eq:xpmz} generate a finite-codimensional subspace. Then $T$ has finite-rank. We have proved the key ingredient for all the examples.

\begin{lemma}[Geometrical key lemma, Lindenstrauss]\label{lemma:geometrical-key}
Let $X$, $Y$ be Banach spaces. Suppose that for every $x_0\in S_X$ the closed linear span of the set of those $z\in X$ such that $\|x_0\pm z\|\leq 1$ is finite-codimensional and that $Y$ is strictly convex. Then, $NA(X,Y)\subseteq F(X,Y)$.
\end{lemma}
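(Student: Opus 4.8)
The plan is to combine the displayed implication \eqref{eq:xpmz} with the hypothesis on the geometry of $S_X$. Fix $T\in NA(X,Y)$; we may assume $T\neq 0$ and, after normalizing, $\|T\|=1$. Let $x_0\in S_X$ be a point where $T$ attains its norm, so $\|Tx_0\|=1$, i.e.\ $Tx_0\in S_Y$. Since $Y$ is strictly convex, every point of $S_Y$ is an extreme point of $B_Y$; in particular $Tx_0$ is. Now apply \eqref{eq:xpmz}: for every $z\in X$ with $\|x_0\pm z\|\leq 1$ we conclude $Tz=0$. Thus $T$ vanishes on the set $E_{x_0}:=\{z\in X:\ \|x_0\pm z\|\leq 1\}$.

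Next I would pass from this set to its closed linear span. Since $T$ is linear and bounded, $\ker T$ is a closed subspace, and it contains $E_{x_0}$, hence it contains $\overline{\operatorname{span}}\,E_{x_0}$. By hypothesis this closed linear span is finite-codimensional in $X$; say $X = \overline{\operatorname{span}}\,E_{x_0} \oplus M$ with $\dim M <\infty$ (as a vector space complement). Then $T(X) = T(M)$, which is finite-dimensional, so $T$ has finite rank, i.e.\ $T\in F(X,Y)$. Since $T\in NA(X,Y)$ was arbitrary, $NA(X,Y)\subseteq F(X,Y)$, which is the claim.

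There is essentially no hard analytic step here; the lemma is a clean packaging of the geometric observation in \eqref{eq:xpmz}. The one point that deserves care is the passage from "$T$ kills all the $z$'s" to "$T$ kills their closed linear span," which uses only continuity and linearity of $T$ together with the fact that $\ker T$ is closed — and then the finite-codimensionality hypothesis immediately forces finite rank. The only genuinely substantive input, namely that an operator attaining its norm at a non-extreme point of $B_X$ must annihilate the relevant directions when the target is strictly convex, has already been recorded in \eqref{eq:xpmz}, so I would simply cite it.
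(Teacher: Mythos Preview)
Your proof is correct and follows essentially the same line as the paper: the paper's argument is given in the discussion preceding the lemma, namely use \eqref{eq:xpmz} together with the strict convexity of $Y$ to conclude that $T$ annihilates the set $\{z:\|x_0\pm z\|\leq 1\}$, then observe that the closed linear span of this set is finite-codimensional so $T$ has finite rank. Your write-up just makes the passage from $E_{x_0}$ to $\overline{\operatorname{span}}\,E_{x_0}$ via the closedness of $\ker T$ explicit, which is a welcome bit of care but not a different idea.
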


This result was used by J.~Lindenstrauss \cite{Lindens} to give a direct proof of the fact that $c_0$ fails property A (this result is deductible from earlier parts in the same paper). How was this done? Just observing that for every $x_0\in S_{c_0}$, there is $N\in \N$ such that $\left\|x_0\pm \tfrac{1}{2}e_n\right\|\leq 1$ for every $n\geq N$ and that there is a strictly convex Banach space $Y$ isomorphic to $c_0$. The isomorphism from $c_0$ into $Y$ is not compact and so it cannot be approximated by norm attaining operators by Lemma~\ref{lemma:geometrical-key}. The argument also shows that strictly convex renorming of $c_0$ do not have property B. Similar arguments like the above ones, replacing $c_0$ by suitable Banach spaces, have been used by T.~Gowers \cite{Gowers} to show that $\ell_p$ fails Lindenstrauss property B, and by M.~Acosta \cite{Aco-Edinburgh,Aco-contemporary} to show that no infinite-dimensional strictly convex Banach space nor infinite-dimensional $L_1(\mu)$ space has Lindenstrauss property B.

But let us return to compact operators. Instead of looking for a non-compact operator which cannot be approximated by norm attaining operators using Lemma~\ref{lemma:geometrical-key}, what is done in \cite{Martin-KNA} is to use domain spaces without the approximation property. The key idea there is that Lindenstrauss' argument for $c_0$ extends to all of its closed subspaces.

\begin{lemma}[\textrm{\cite{Martin-KNA}}]\label{lemma-c0-noextreme}
Let $X$ be a closed subspace of $c_0$. Then, for every $x_0\in S_X$ the closed linear span of the set of those $z\in X$ such that $\|x_0\pm z\|\leq 1$ generates a finite-codimensional subspace.
\end{lemma}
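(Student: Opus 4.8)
The plan is to imitate verbatim Lindenstrauss' argument for $c_0$ itself, the key realization being that the coordinatewise description of the supremum norm is inherited by \emph{every} subspace of $c_0$, so nothing about the particular subspace $X$ matters. Fix $x_0\in S_X\subseteq S_{c_0}$. Since $x_0\in c_0$, its coordinates tend to $0$, so there is $N\in\N$ such that $|x_0(n)|\leq \tfrac12$ for every $n\geq N$. This is the only point at which the ambient $c_0$-structure is used.

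Next I would introduce the subspace
$$X_N:=\bigl\{x\in X:\ x(1)=\cdots=x(N-1)=0\bigr\}.$$
It is the kernel of the bounded operator $x\longmapsto\bigl(x(1),\dots,x(N-1)\bigr)$ from $X$ into the $(N-1)$-dimensional space of scalars, hence it is a closed subspace of $X$ of codimension at most $N-1$.

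The heart of the matter is then an elementary coordinatewise estimate: if $z\in X_N$ and $\|z\|\leq\tfrac12$, then $\|x_0\pm z\|\leq1$. Indeed, for $n<N$ we have $(x_0\pm z)(n)=x_0(n)$, so $|(x_0\pm z)(n)|\leq\|x_0\|=1$; while for $n\geq N$ we have $|(x_0\pm z)(n)|\leq|x_0(n)|+|z(n)|\leq\tfrac12+\tfrac12=1$. Taking the supremum over $n$ gives the assertion. Consequently the set $\{z\in X:\ \|x_0\pm z\|\leq1\}$ contains the ball $\tfrac12B_{X_N}$, whose linear span is all of $X_N$; therefore its closed linear span is a closed subspace of $X$ containing the finite-codimensional subspace $X_N$, and hence is itself finite-codimensional.

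There is essentially no obstacle in this argument; if anything, the only thing worth emphasizing is precisely that the whole computation is purely coordinatewise and therefore completely insensitive to which closed subspace $X\subseteq c_0$ is chosen. This is exactly what makes Lemma~\ref{lemma:geometrical-key} applicable to \emph{arbitrary} closed subspaces of $c_0$ (and, in particular, to those failing the approximation property), which is the mechanism behind the negative examples of \cite{Martin-KNA}.
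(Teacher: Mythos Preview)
Your proof is correct and follows exactly the same approach as the paper's own argument: pick $N$ so that the tail coordinates of $x_0$ are at most $1/2$, intersect $X$ with the kernel of the first finitely many coordinate functionals to obtain a finite-codimensional subspace, and observe that half its ball satisfies $\|x_0\pm z\|\leq 1$. The only differences are cosmetic (your indexing is off by one relative to the paper, and you spell out the coordinatewise estimate and the finite-codimensionality in more detail).
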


\begin{proof}
Fix $x_0\in S_X$. As $x_0\in c_0$, there is $N\in \N$ such that $|x_0(n)|<1/2$ for every $n\geq N$. Now, consider the finite-codimensional subspace of $X$ given by
$$
Z=\bigl\{z\in X\,:\, z(i)=0\ \text{for } 1\leq i \leq N \bigr\},
$$
and observe that for every $z\in Z$ with $\|z\|\leq 1/2$, we have
$\left\|x_0 \pm z\right\|\leq 1$.
\end{proof}

Therefore, combining this result with Lemma~\ref{lemma:geometrical-key}, we get the following.

\begin{corollary}[\textrm{\cite{Martin-KNA}}]
\label{corollary:Xsubc0Ystrictlyconvex}
Let $X$ be a closed subspace of $c_0$ and let $Y$ be a strictly convex Banach space. Then, $NA(X,Y)\subseteq F(X,Y)$.
\end{corollary}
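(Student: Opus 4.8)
The plan is to combine the two lemmas stated just above in the obvious way. Let $X$ be a closed subspace of $c_0$ and let $Y$ be strictly convex. By Lemma~\ref{lemma-c0-noextreme}, for every $x_0\in S_X$ the closed linear span of the set $\{z\in X\,:\,\|x_0\pm z\|\leq 1\}$ is finite-codimensional in $X$. This is precisely the geometric hypothesis on the domain space required in Lemma~\ref{lemma:geometrical-key}, and the hypothesis on the range space (strict convexity of $Y$) is given. Applying Lemma~\ref{lemma:geometrical-key} to the pair $(X,Y)$ yields $NA(X,Y)\subseteq F(X,Y)$, which is exactly the assertion.

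So the proof is a two-line deduction: verify that the hypotheses of the geometrical key lemma hold and invoke it. There is really no obstacle here, since all the work has already been isolated into Lemma~\ref{lemma-c0-noextreme} (the combinatorial/geometric fact about subspaces of $c_0$) and Lemma~\ref{lemma:geometrical-key} (the abstract principle that norm-attaining operators into strictly convex spaces, from a domain with many non-extreme points, must be finite-rank). The only thing worth spelling out is the matching of quantifiers: Lemma~\ref{lemma-c0-noextreme} gives the finite-codimensionality for \emph{every} $x_0\in S_X$, which is the universally quantified condition Lemma~\ref{lemma:geometrical-key} asks for.

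If one wanted to be slightly more self-contained, one could instead unfold the argument directly: take $T\in NA(X,Y)$ with $\|T\|=1$ attaining its norm at $x_0\in S_X$, pick $N$ with $|x_0(n)|<1/2$ for $n\geq N$, and note that for every $z\in X$ vanishing on the first $N$ coordinates with $\|z\|\leq 1/2$ one has $\|x_0\pm z\|\leq 1$, hence $\|Tx_0\pm Tz\|\leq 1$; since $Tx_0\in S_Y$ is an extreme point of $B_Y$ (strict convexity), this forces $Tz=0$. Thus $T$ vanishes on a finite-codimensional subspace of $X$ and is therefore finite-rank. But given that both lemmas are already in place, the cleanest presentation is just to cite them, and that is what I would write.
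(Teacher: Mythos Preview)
Your proof is correct and is exactly the paper's approach: the corollary is stated as an immediate consequence of combining Lemma~\ref{lemma-c0-noextreme} with Lemma~\ref{lemma:geometrical-key}, with no further argument given. Your optional unfolding is also fine but unnecessary here.
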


What is next? We just have to recall the approximation property and use Proposition~\ref{prop-AP}. Pick a closed subspace $X$ of $c_0$ without the approximation property. Then, $X^*$ also fails the approximation property, so there is a Banach space $Y$ and $T\in K(X,Y)$ which is not in the closure of $F(X,Y)$. Considering $Y$ as the range of $T$, which is separable, we may suppose that $Y$ is strictly convex (using an equivalent renorming, see \cite[\S II.2]{DGZ}). Now, Corollary~\ref{corollary:Xsubc0Ystrictlyconvex} gives that $T$ cannot be approximated by norm attaining operators. We have proved.

\begin{fact}[\textrm{\cite{Martin-KNA}}]
There exist compact operators between Banach spaces which cannot be approximated by norm attaining operators.
\end{fact}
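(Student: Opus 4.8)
The plan is to assemble three ingredients that are already in hand: Enflo's example of a subspace of $c_0$ without the approximation property (Proposition~\ref{prop-AP}.d), the approximation-theoretic dictionary of Proposition~\ref{prop-AP}, and the geometric Corollary~\ref{corollary:Xsubc0Ystrictlyconvex}. First I would fix, by Proposition~\ref{prop-AP}.d, a closed subspace $X\subseteq c_0$ failing the approximation property. By the contrapositive of Proposition~\ref{prop-AP}.c, $X^*$ then also fails the approximation property, so Proposition~\ref{prop-AP}.b furnishes a Banach space $Y$ and an operator $T\in K(X,Y)$ with $T\notin\overline{F(X,Y)}$.

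Next I would cut $Y$ down to the range. Replacing $Y$ by the separable space $\overline{T(X)}$ costs nothing: every element of $F\bigl(X,\overline{T(X)}\bigr)$ is, verbatim and with the same norm, an element of $F(X,Y)$, so $T$ still lies outside $\overline{F\bigl(X,\overline{T(X)}\bigr)}$. Being separable, $\overline{T(X)}$ admits an equivalent strictly convex norm (see \cite[\S II.2]{DGZ}); since this renorming is an equivalence of norms it alters neither the topology of the operator space nor the classes of compact and of finite-rank operators, so after renorming we still have $T\in K(X,Y)$ with $T\notin\overline{F(X,Y)}$, and now $Y$ is strictly convex.

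Finally, Corollary~\ref{corollary:Xsubc0Ystrictlyconvex} applies to the pair $(X,Y)$ and yields $NA(X,Y)\subseteq F(X,Y)$; in particular $NA(X,Y)\cap K(X,Y)\subseteq F(X,Y)$, whence $\overline{NA(X,Y)\cap K(X,Y)}\subseteq\overline{F(X,Y)}$. Since $T$ lies in $K(X,Y)$ but not in $\overline{F(X,Y)}$, it cannot be approximated by norm attaining operators, which is precisely the assertion.

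Once the three cited results are granted, the argument above is essentially bookkeeping; the genuine difficulty is hidden in those results, namely in Enflo's construction of an approximation-property-free subspace of $c_0$ and in the geometric core — that every closed subspace of $c_0$ satisfies the flatness condition of Lemma~\ref{lemma-c0-noextreme} and hence, through Lemma~\ref{lemma:geometrical-key}, forces norm attaining operators into strictly convex spaces to have finite rank. The only assembly point requiring a moment's care is verifying that passing to the range and to an equivalent strictly convex renorming does not destroy the non-approximability of $T$, which is clear because all the norms involved are equivalent and finite-rank operators into a subspace are finite-rank into the whole space.
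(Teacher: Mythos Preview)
Your proof is correct and follows essentially the same route as the paper's own argument: pick a subspace $X\subset c_0$ without the approximation property, use Proposition~\ref{prop-AP} to obtain $T\in K(X,Y)\setminus\overline{F(X,Y)}$, pass to the (separable) range and renorm strictly convexly, then invoke Corollary~\ref{corollary:Xsubc0Ystrictlyconvex}. You have simply made explicit the bookkeeping (why restricting to $\overline{T(X)}$ and renorming preserve $T\notin\overline{F(X,Y)}$) that the paper leaves to the reader; note also that since $NA(X,Y)\subseteq F(X,Y)$ you in fact get the slightly stronger conclusion $T\notin\overline{NA(X,Y)}$ without needing to intersect with $K(X,Y)$ first.
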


Actually, we have proved the following result.

\begin{prop}[\textrm{\cite{Martin-KNA}}]\label{prop:c0noAP-failsAk}
Every closed subspace of $c_0$ whose dual does not have the approximation property fails property \Ak.
\end{prop}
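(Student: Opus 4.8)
The plan is to assemble, in order, Proposition~\ref{prop-AP}.b, a strictly convex renorming of a separable space, and Corollary~\ref{corollary:Xsubc0Ystrictlyconvex}, which is essentially the recipe already outlined in the paragraph preceding the statement. First I would fix a closed subspace $X\subseteq c_0$ whose dual $X^*$ fails the approximation property. By Proposition~\ref{prop-AP}.b, the failure of the approximation property of $X^*$ means precisely that $\overline{F(X,Y)}\neq K(X,Y)$ for some Banach space $Y$; I would fix such a $Y$ together with a nonzero operator $T\in K(X,Y)$ with $T\notin\overline{F(X,Y)}$.

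Next I would replace $Y$ by a strictly convex range. Put $Y_0=\overline{T(X)}$, which is separable because $X$ is separable and $T$ is continuous, and let $|\cdot|$ be an equivalent strictly convex norm on $Y_0$ (such a norm exists on every separable Banach space, see \cite[\S II.2]{DGZ}); write $\widetilde{Y_0}$ for $(Y_0,|\cdot|)$ and let $j\colon Y_0\to\widetilde{Y_0}$ be the identity, which is an isomorphism. Set $S=jT\colon X\longrightarrow\widetilde{Y_0}$. Then $S$ is compact, and $S\notin\overline{F(X,\widetilde{Y_0})}$: indeed $F(X,\widetilde{Y_0})$ and $F(X,Y_0)$ coincide as sets of operators, and if $S$ were a limit of finite-rank operators in the $|\cdot|$-operator norm, then, applying the bounded map $j^{-1}$, the operator $T$ would be a limit of finite-rank operators in the original norm, contradicting the choice of $T$.

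Finally I would invoke the geometric dichotomy. Since $X$ is a closed subspace of $c_0$ and $\widetilde{Y_0}$ is strictly convex, Corollary~\ref{corollary:Xsubc0Ystrictlyconvex} yields $NA(X,\widetilde{Y_0})\subseteq F(X,\widetilde{Y_0})$, hence
\[
K(X,\widetilde{Y_0})\cap NA(X,\widetilde{Y_0})\subseteq F(X,\widetilde{Y_0}),
\]
so the closure of $K(X,\widetilde{Y_0})\cap NA(X,\widetilde{Y_0})$ inside $K(X,\widetilde{Y_0})$ is contained in $\overline{F(X,\widetilde{Y_0})}$. Since $S\in K(X,\widetilde{Y_0})\setminus\overline{F(X,\widetilde{Y_0})}$, this closure is a proper subset of $K(X,\widetilde{Y_0})$, i.e.\ $K(X,\widetilde{Y_0})\cap NA(X,\widetilde{Y_0})$ is not dense in $K(X,\widetilde{Y_0})$. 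Taking $Z=\widetilde{Y_0}$ as the witnessing space, $X$ fails property \Ak.

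The only point requiring a little care is the transfer of the non-approximability of $T$ from $(Y_0,\|\cdot\|)$ to $\widetilde{Y_0}$; this is routine, since being finite-rank is a norm-independent property and $j,j^{-1}$ are bounded, so the two closures $\overline{F(X,Y_0)}$ and $\overline{F(X,\widetilde{Y_0})}$ correspond to one another under $j$. Everything else is bookkeeping with the cited results, so I do not expect any genuine obstacle.
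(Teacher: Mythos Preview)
Your proposal is correct and follows essentially the same route as the paper: use Proposition~\ref{prop-AP}.b to obtain a compact non-finite-rank-approximable operator, pass to the separable closed range, renorm that range strictly convexly, and then invoke Corollary~\ref{corollary:Xsubc0Ystrictlyconvex}. Your write-up is actually a bit more careful than the paper's sketch in verifying that the non-approximability of $T$ survives both the restriction of the codomain to $Y_0=\overline{T(X)}$ and the passage to the equivalent strictly convex norm, but there is no substantive difference in the argument.
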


An specially interesting example can be given using an space constructed by W.~Johnson and G.~Schechtman \cite[Corollary~JS, p.~127]{Joh-Oik} which is a closed subspace of $c_0$ with Schauder basis whose dual fails the approximation property.

\begin{example}[\textrm{\cite{Martin-KNA}}]\label{exa:JS}
There exist a subspace of $c_0$ with Schauder basis failing property \Ak.
\end{example}

Compare this result with Corollary~\ref{coro-subc0monotone} in next section.

Next we would like to produce more examples of spaces without property \Ak. We say that the norm of a Banach space $X$ \emph{locally depends upon finitely many coordinates} if for every $x\in X\setminus \{0\}$, there exist $\varepsilon>0$, a finite subset $\{f_1,f_2,\ldots,f_n\}$ of $X^*$ and a continuous function $\varphi:\R^n \longrightarrow \R$ such that $\|y\|=\varphi(f_1(y),f_2(y),\ldots,f_n(y))$ for every $y\in X$ such that $\|x-y\|<\varepsilon$. Clearly, this property is inherited by closed subspaces. We refer to \cite{Godefroy,Hajek-Zizler} and references therein for background. For instance, closed subspaces of $c_0$ have this property \cite[Proposition~III.3]{Godefroy}. Conversely, every infinite-dimensional Banach space whose norm locally depends upon finitely many coordinates contains an isomorphic copy of $c_0$ \cite[Corollary~IV.5]{Godefroy}. It is easy to extend the proof of Lemma~\ref{lemma-c0-noextreme} to this case and then use Lemma~\ref{lemma:geometrical-key} to get the following extension of Proposition~\ref{prop:c0noAP-failsAk}.

\begin{prop}\label{prop-LFC}
Let $X$ be a Banach space whose norm locally depends upon finitely many coordinates and whose dual fails the approximation property. Then $X$ does not have property \Ak.
\end{prop}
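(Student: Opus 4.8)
The plan is to mimic the proof of Proposition~\ref{prop:c0noAP-failsAk} step by step, replacing the use of Lemma~\ref{lemma-c0-noextreme} by its announced extension to spaces whose norm locally depends upon finitely many coordinates (LFC). First I would isolate the geometric heart of the matter as a lemma: if the norm of $X$ locally depends upon finitely many coordinates, then for every $x_0\in S_X$ the closed linear span of $\{z\in X:\|x_0\pm z\|\leq 1\}$ is finite-codimensional. To prove this, fix $x_0\in S_X$, apply the LFC definition at the point $x_0$ to obtain $\varepsilon>0$, functionals $f_1,\dots,f_n\in X^*$ and a continuous $\varphi:\R^n\to\R$ with $\|y\|=\varphi(f_1(y),\dots,f_n(y))$ whenever $\|x_0-y\|<\varepsilon$. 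Set $Z=\bigcap_{i=1}^n \ker f_i$, a subspace of finite codimension at most $n$. For $z\in Z$ with $\|z\|\leq\varepsilon/2$ one has $f_i(x_0\pm z)=f_i(x_0)$ for all $i$, hence $\|x_0\pm z\|=\varphi(f_1(x_0),\dots,f_n(x_0))=\|x_0\|=1$; thus a ball in $Z$ (and so all of $Z$) consists of such $z$'s, and the closed linear span in question contains $Z$, which is finite-codimensional.

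With this lemma in hand, the proof of Proposition~\ref{prop-LFC} proceeds exactly as the derivation of Proposition~\ref{prop:c0noAP-failsAk} from Lemma~\ref{lemma-c0-noextreme}. Since $X^*$ fails the approximation property, Proposition~\ref{prop-AP}.b provides a Banach space $Y_0$ and a compact operator $T\in K(X,Y_0)$ that does not lie in $\overline{F(X,Y_0)}$. Replace $Y_0$ by the separable range $Y=\overline{T(X)}$; since $Y$ is separable it admits an equivalent strictly convex norm (as cited, \cite[\S II.2]{DGZ}), and renorming the range does not change whether $T$ is approximable by finite-rank operators, nor whether it is compact, so we may assume $Y$ is strictly convex and $T\notin\overline{F(X,Y)}$. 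Now the LFC hypothesis on $X$ together with the lemma above verifies the hypothesis of the Geometrical key lemma~\ref{lemma:geometrical-key} for the pair $(X,Y)$ (here I use that strict convexity of $Y$ is the other hypothesis of that lemma), whence $NA(X,Y)\subseteq F(X,Y)$. Consequently $K(X,Y)\cap NA(X,Y)\subseteq F(X,Y)\subseteq\overline{F(X,Y)}$, which does not contain $T$; so $T\in K(X,Y)$ cannot be approximated by norm attaining operators, and $X$ fails property~\Ak.

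I do not expect any serious obstacle, since every ingredient is already available in the excerpt; the proof is a routine adaptation. The one point that deserves a sentence of care is the passage from a finite ball of $Z$ to the whole span: one should note that a bounded subset that spans a subspace $Z$ has closed linear span containing all of $Z$, so the closed linear span of $\{z:\|x_0\pm z\|\le1\}$ is finite-codimensional because it contains the finite-codimensional $Z$. A second minor point is to make explicit that renorming the (separable) range space preserves both compactness of $T$ and its distance-from-$F(X,Y)$ status up to equivalence of norms — this is immediate but worth stating so the reduction to a strictly convex range is clean. Everything else is a direct citation of Lemma~\ref{lemma:geometrical-key}, Proposition~\ref{prop-AP}, and the existence of strictly convex renormings of separable spaces.
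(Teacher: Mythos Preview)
Your proof is correct and follows essentially the same approach as the paper: the paper packages your geometric lemma together with the appeal to Lemma~\ref{lemma:geometrical-key} into a single statement (Lemma~\ref{lemma-LFC}, asserting $NA(X,Y)\subseteq F(X,Y)$ whenever $X$ is LFC and $Y$ is strictly convex), proved exactly as you describe via $Z=\bigcap_i\ker f_i$, and then deduces Proposition~\ref{prop-LFC} by repeating verbatim the argument for Proposition~\ref{prop:c0noAP-failsAk}. The only difference is organizational, and your additional remarks about the span containing $Z$ and about renorming the range are welcome clarifications that the paper leaves implicit.
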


To prove the proposition, we need an extension of Corollary~\ref{corollary:Xsubc0Ystrictlyconvex}.

\begin{lemma}\label{lemma-LFC}
Let $X$ be a Banach space whose norm locally depends upon finitely many coordinates and let $Y$ be a strictly convex Banach space. Then, $NA(X,Y)\subseteq F(X,Y)$.
\end{lemma}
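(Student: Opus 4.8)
The plan is to follow the pattern already established by Lemma~\ref{lemma-c0-noextreme}, but with the pointwise-small coordinates of $c_0$ replaced by the local structure coming from the fact that the norm depends on finitely many coordinates near $x_0$. By Lemma~\ref{lemma:geometrical-key}, it suffices to show that for every $x_0\in S_X$ the closed linear span of $\{z\in X: \|x_0\pm z\|\leq 1\}$ is finite-codimensional; strict convexity of $Y$ then delivers $NA(X,Y)\subseteq F(X,Y)$ automatically. So the real content is a local structural statement about $X$ alone, and I would isolate it as the single step to prove.

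First I would apply the hypothesis at the point $x_0$: there exist $\varepsilon>0$, functionals $f_1,\ldots,f_n\in X^*$, and a continuous $\varphi:\R^n\to\R$ with $\|y\|=\varphi(f_1(y),\ldots,f_n(y))$ for all $y$ with $\|x_0-y\|<\varepsilon$. Let $Z=\bigcap_{i=1}^n \ker f_i$, a finite-codimensional (indeed codimension $\leq n$) closed subspace of $X$. The claim is that every sufficiently small $z\in Z$ satisfies $\|x_0\pm z\|\leq 1$, which would show $Z$ is contained in the span in question and finish the proof. To see this, take $z\in Z$ with $\|z\|<\varepsilon$ (so in particular $\|x_0-(x_0\pm z)\|=\|z\|<\varepsilon$, putting $x_0\pm z$ in the neighborhood where the formula is valid). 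Then $f_i(x_0\pm z)=f_i(x_0)$ for every $i$, hence $\|x_0\pm z\|=\varphi(f_1(x_0),\ldots,f_n(x_0))=\|x_0\|=1$. Thus $Z\cap \varepsilon B_X \subseteq \{z: \|x_0\pm z\|\leq 1\}$, the closed linear span of the latter contains $Z$ and is therefore finite-codimensional. Invoking Lemma~\ref{lemma:geometrical-key} completes the argument.

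The only mildly delicate point is making sure the neighborhood of validity is used correctly: the formula $\|y\|=\varphi(f_1(y),\ldots,f_n(y))$ is guaranteed only for $y$ with $\|x_0-y\|<\varepsilon$, so one must check that both $x_0+z$ and $x_0-z$ lie in that ball, which is immediate once $\|z\|<\varepsilon$. There is genuinely no obstacle beyond this bookkeeping — the argument is a verbatim generalization of the $c_0$ case, where the subspace $\{z:z(i)=0,\ 1\leq i\leq N\}$ played the role of $Z$ and the coordinate functionals $e_1^*,\ldots,e_N^*$ played the role of $f_1,\ldots,f_n$. One could remark that Lemma~\ref{lemma-c0-noextreme} is literally the special case in which $\varphi(t_1,\ldots,t_N,\ldots)=\max_i|t_i|$ on a suitable neighborhood, so no new idea is needed, only the observation that "finitely many coordinates" may be replaced by "finitely many functionals."

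With Lemma~\ref{lemma-LFC} in hand, Proposition~\ref{prop-LFC} follows exactly as Proposition~\ref{prop:c0noAP-failsAk} did: if $X$ has the stated property and $X^*$ fails the approximation property, then by Proposition~\ref{prop-AP}.b there is a Banach space and a compact operator from $X$ into it not approximable by finite-rank operators; passing to the (separable) range and renorming it to be strictly convex, Lemma~\ref{lemma-LFC} forces every norm-attaining operator from $X$ into that strictly convex space to be finite-rank, so the given compact operator cannot be approximated by norm-attaining ones, i.e.\ $X$ fails property \Ak.
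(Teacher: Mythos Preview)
Your proof is correct and follows essentially the same route as the paper: define $Z=\bigcap_{i=1}^n\ker f_i$, observe that $\|x_0\pm z\|=\varphi(f_1(x_0),\ldots,f_n(x_0))=1$ for $z\in Z$ with $\|z\|<\varepsilon$, and invoke Lemma~\ref{lemma:geometrical-key}. Your additional remarks (checking that $x_0\pm z$ lie in the ball of validity, identifying Lemma~\ref{lemma-c0-noextreme} as a special case, and sketching the deduction of Proposition~\ref{prop-LFC}) are all accurate and in line with the paper's presentation.
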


\begin{proof}
Fix $x_0\in S_X$. Consider $\varepsilon>0$, $\{f_1,f_2,\ldots,f_n\}\subset X^*$ and $\varphi:\R^n\longrightarrow \R$ given by the hypothesis for $x_0$. Let $Z=\bigcap_{i=1}^n \ker f_i$, which is finite-codimensional. For $z\in Z$ with $\|z\|<\varepsilon$, we have that
$$
\|x_0 \pm z\|=\varphi\bigl(f_1(x_0 \pm z),f_2(x_0 \pm z),\ldots,f_n(x_0 \pm z)\bigr)=\varphi\bigl(f_1(x_0),f_2(x_0),\ldots,f_n(x_0)\bigr)=1.
$$
Now, Lemma~\ref{lemma:geometrical-key} gives the result.
\end{proof}

Now, Proposition~\ref{prop-LFC} follows from the above lemma repeating the arguments used to prove Proposition~\ref{prop:c0noAP-failsAk}.

As another application of the above proposition, we get more examples of spaces failing property \Ak. We recall that a \emph{real} Banach space is said to be \emph{polyhedral} if the unit balls of all of its finite-dimensional subspaces are polyhedra (i.e.\ the convex hull of finitely many points). A typical example of polyhedral space is $c_0$ and hence, so are its closed subspaces. We refer to \cite{FonfLindPhelp} for background on polyhedral spaces. It is known that every polyhedral Banach space admits an equivalent norm which locally depends upon finitely many coordinates (see \cite[Theorem~3]{Hajek-Zizler}). This gives the following consequence of Proposition~\ref{prop-LFC}.

\begin{corollary}
Let $X$ be a real polyhedral Banach space such that $X^*$ does not have the approximation property. Then $X$ admits an equivalent norm failing property \Ak.
\end{corollary}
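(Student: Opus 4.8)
The plan is to combine the renorming theorem for polyhedral spaces quoted just above with Proposition~\ref{prop-LFC}. First, I would invoke \cite[Theorem~3]{Hajek-Zizler} to produce an equivalent norm on $X$ which locally depends upon finitely many coordinates; denote by $X_0$ the resulting Banach space (so $X_0$ coincides with $X$ as a vector space, equipped with this new, equivalent norm). Thus $X_0$ already satisfies the first hypothesis of Proposition~\ref{prop-LFC}.

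The second step is to check that the dual of $X_0$ still fails the approximation property. This is immediate from the isomorphic invariance of the approximation property: the identity map from $X_0$ onto $X$ is an isomorphism, so its adjoint is an isomorphism from $X^*$ onto $X_0^*$, whence $X_0^*$ is isomorphic to $X^*$, which by hypothesis does not have the approximation property. (Alternatively, one may argue directly from the characterization in Proposition~\ref{prop-AP}.b, which is visibly unaffected by replacing the norm of the domain space by an equivalent one.)

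Finally, $X_0$ has a norm which locally depends upon finitely many coordinates and a dual failing the approximation property, so Proposition~\ref{prop-LFC} applies and yields that $X_0$ does not have property \Ak. Since $X_0$ is nothing but $X$ endowed with an equivalent norm, this is exactly the asserted conclusion. No step here is a genuine obstacle; the only point deserving a line of justification is the isomorphic invariance of (the failure of) the approximation property invoked in the second step, and this is classical.
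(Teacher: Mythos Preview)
Your proof is correct and follows exactly the route indicated in the paper: renorm via \cite[Theorem~3]{Hajek-Zizler} to get a norm locally depending upon finitely many coordinates, then apply Proposition~\ref{prop-LFC}. The paper leaves the isomorphic invariance of the approximation property implicit, while you spell it out, but the argument is the same.
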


We would like next to use Lemma~\ref{lemma-LFC} to provide an alternative proof of the result by T.~Gowers \cite{Gowers} stating  that infinite-dimensional $L_p$ spaces fails Lindenstrauss property B.

\begin{corollary}[Alternative proof of a result of Gowers]
For $1<p<\infty$, the infinite dimensional $L_p$-spaces fail Lindenstrauss property B.
\end{corollary}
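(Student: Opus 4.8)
The plan is to apply Lemma~\ref{lemma-LFC} with the strictly convex space taken to be $L_p$ itself. For $1<p<\infty$, $L_p$ is uniformly convex (Clarkson), hence strictly convex, and it has the approximation property. So for \emph{every} Banach space $X$ whose norm locally depends upon finitely many coordinates, Lemma~\ref{lemma-LFC} gives $NA(X,L_p)\subseteq F(X,L_p)$. Consequently it suffices to exhibit one such space $X$ together with an operator $T\in L(X,L_p)$ that cannot be approximated by norm attaining ones: then $NA(X,L_p)$ fails to be dense in $L(X,L_p)$, so $L_p$ does not have property B.

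I would look for $X$ among the closed subspaces of $c_0$ (which automatically have an LFC norm, as recalled above), equipped with a normalized basis obtained from suitably overlapping blocks of the unit vector basis of $c_0$, and I would define a compact operator $T\colon X\to L_p$ by sending this basis to a seminormalized, norm-null, disjointly supported sequence of $L_p$. The leverage provided by Lemma~\ref{lemma-LFC} — more precisely, by the argument in the proof of Lemma~\ref{lemma-c0-noextreme} — is that every norm attaining $S\in L(X,L_p)$ does not merely have finite rank: it vanishes on a finite-codimensional subspace of $X$ of the very particular form $\{z\in X:z(i)=0\text{ for }1\le i\le N\}$, so $S$ factors through a finite-dimensional quotient of $X$ of that restricted ``initial-coordinates'' type. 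The construction of $X$ (the way the blocks overlap) must then be arranged so that $T$ stays at a fixed positive distance from \emph{every} operator that factors through such a quotient; that yields $T\notin\overline{NA(X,L_p)}$ and finishes the proof.

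The main difficulty is precisely this uniform separation estimate. Since $T$ is compact and $L_p$ has the approximation property, $T$ is a norm limit of finite-rank operators, so one cannot separate $T$ from $NA(X,L_p)$ by a crude rank or support argument; indeed the obvious lower bound for $\|T-S\|$, namely the norm of the restriction of $T$ to $\{z\in X:z(i)=0,\ 1\le i\le N\}$, tends to $0$. The positive lower bound has to come from the geometry of $X$: one engineers the overlapping-block structure so that the subspaces $\{z\in X:z(i)=0,\ i\le N\}$ are badly non-proximinal (equivalently, the associated finite-rank projections have norms tending to infinity), which is exactly what prevents $T$ from being cheaply ``truncated'' into a norm attaining operator. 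Carrying out this construction and the accompanying quantitative estimate is the technical heart of the argument, and is where Gowers' block-basis machinery enters, now reorganized so as to feed into Lemma~\ref{lemma-LFC}.
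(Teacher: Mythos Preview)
Your plan misses the key simplification that makes the paper's argument an \emph{alternative} to Gowers'. The paper does not look for a compact operator that is far from $NA(X,L_p)$; it simply exhibits a \emph{non-compact} operator $T\in L(X,L_p)$. Once Lemma~\ref{lemma-LFC} gives $NA(X,L_p)\subseteq F(X,L_p)\subseteq K(X,L_p)$, any non-compact $T$ is automatically outside $\overline{NA(X,L_p)}$ and the proof is over in one line. The only issue is to produce an LFC space admitting a non-compact operator into $L_p$, and for this the paper invokes Gasparis' $c_0$-saturated space $E_p$, which is polyhedral (hence LFC after renorming) and has $\ell_p$ as a quotient; the quotient map is the desired non-compact $T$.

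By contrast, you commit from the outset to a \emph{compact} $T$ and to $X$ a subspace of $c_0$. That throws away exactly the leverage Lemma~\ref{lemma-LFC} provides: the inclusion $NA\subseteq F$ tells you nothing useful about a compact $T$, since $T$ already lies in $\overline{F(X,L_p)}$. You then have to separate $T$ from the particular family of finite-rank operators that vanish on the tails $\{z\in X: z(i)=0,\ i\le N\}$, and you correctly identify that this forces a delicate construction with badly unbounded coordinate projections and a quantitative lower bound --- which is precisely Gowers' block-basis argument in disguise. In other words, your route is not wrong, but it is incomplete as written and, once completed, is not an alternative proof at all: it is a reorganization of Gowers' original proof with Lemma~\ref{lemma-LFC} serving only as bookkeeping. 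The genuinely new ingredient you are missing is the existence of an LFC (indeed polyhedral) domain with a non-compact map to $\ell_p$; that is what Gasparis' space supplies and what makes the paper's proof short.
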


\begin{proof}
Fix $1<p<\infty$ and let suppose that $Y=L_p(\mu)$ is infinite-dimensional. By \cite[Theorem~1.1]{Gasparis-MathAnn}, there is a Banach space $E_p$ which is polyhedral (up to a renorming) and contains a quotient isomorphic to $\ell_p$. Let $X$ be an equivalent renorming of $E_p$ whose norm locally depends upon finitely many coordinates (see \cite[Theorem~3]{Hajek-Zizler}). Then, there is a non-compact bounded linear operator from $X$ into $Y$. This operator cannot be approximated by norm attaining operators by Lemma~\ref{lemma-LFC}.
\end{proof}

We now deal with the range space. Let $Y$ be a strictly convex Banach space without the approximation property. By a result of A.~Grothendieck \cite[Theorem~18.3.2]{Jarchow}, there is a closed subspace $X$ of $c_0$ such that $F(X,Y)$ is not dense in $K(X,Y)$. This, together with Corollary~\ref{corollary:Xsubc0Ystrictlyconvex}, show that $Y$ fails property \Bk.

\begin{prop}[\textrm{\cite{Martin-KNA}}]
Every strictly convex Banach space without the approximation property fails property \Bk.
\end{prop}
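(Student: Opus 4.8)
The plan is to combine two ingredients already assembled in the excerpt: the geometric fact that operators from subspaces of $c_0$ into strictly convex spaces are finite-rank whenever they attain their norm (Corollary~\ref{corollary:Xsubc0Ystrictlyconvex}), and a classical theorem of Grothendieck providing a witness for the failure of the approximation property in the form of a subspace of $c_0$. Concretely, let $Y$ be strictly convex and fail the approximation property. First I would invoke Grothendieck's characterization (cited as \cite[Theorem~18.3.2]{Jarchow}) to produce a closed subspace $X\subseteq c_0$ such that $\overline{F(X,Y)}\neq K(X,Y)$; pick $T\in K(X,Y)$ that does not lie in $\overline{F(X,Y)}$. (Equivalently, one may start from Proposition~\ref{prop-AP}.b applied to $c_0$, whose dual $\ell_1$ has the approximation property — but Grothendieck's statement directly gives the $c_0$-subspace form, which is what is needed.)

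Next I would run the approximation argument by contradiction. Suppose $T$ can be approximated in operator norm by norm attaining operators; since $T$ is compact, we would want these approximants also compact. Here one uses that $K(X,Y)$ is a closed subspace of $L(X,Y)$, so $K(X,Y)\cap NA(X,Y)$ being dense in $K(X,Y)$ would give a sequence $T_n\to T$ with $T_n\in K(X,Y)\cap NA(X,Y)$; but in fact we do not even need compactness of the approximants — we only need $T_n\in NA(X,Y)$ with $T_n\to T$. By Corollary~\ref{corollary:Xsubc0Ystrictlyconvex}, each such $T_n$ is finite-rank, hence $T_n\in F(X,Y)$. Then $T=\lim T_n\in\overline{F(X,Y)}$, contradicting the choice of $T$. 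This shows $T$ cannot be approximated by norm attaining operators, so $Y$ fails property \Bk.

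The only mildly delicate point — the part I would call the main obstacle, though it is really just bookkeeping — is making sure the logical order is right: property \Bk\ concerns density of $K(Z,Y)\cap NA(Z,Y)$ in $K(Z,Y)$, so we must exhibit a specific $Z$ (namely the $c_0$-subspace $X$) and a specific compact operator into $Y$ that is not approximable by norm attaining ones. The Grothendieck theorem is exactly what guarantees such an $X$ and $T$ exist once the approximation property fails; one should double-check that the cited form of Grothendieck's result indeed yields the domain space inside $c_0$ (it does — this is the standard "approximation property is detected by subspaces of $c_0$ as domains" statement), since Corollary~\ref{corollary:Xsubc0Ystrictlyconvex} is stated only for domains that are subspaces of $c_0$. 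With that in hand the proof is immediate, and is essentially the range-space mirror of the argument that proved Proposition~\ref{prop:c0noAP-failsAk}.
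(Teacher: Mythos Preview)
Your argument is correct and matches the paper's own proof essentially verbatim: invoke Grothendieck's theorem to obtain a closed subspace $X\subseteq c_0$ with $\overline{F(X,Y)}\neq K(X,Y)$, pick $T\in K(X,Y)\setminus\overline{F(X,Y)}$, and then apply Corollary~\ref{corollary:Xsubc0Ystrictlyconvex} to conclude that any norm attaining approximant would have finite rank, contradicting the choice of $T$. The paper compresses this into two sentences, but the content is identical.
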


The same kind of arguments can be applied to subspaces of complex $L_1(\mu)$ spaces without the approximation property. Indeed, every subspace $Y$ of the complex $L_1(\mu)$ space is \emph{complex strictly convex} (see \cite[Proposition~3.2.3]{Istra}) and this means that for every $y\in Y$ with $\|y\|=1$ and $z\in Y$, the condition $\|y + \theta z\|\leq 1$ for every $\theta\in \C$ with $|\theta|=1$ implies $z=0$. An obvious adaption of the proofs of Corollary~\ref{corollary:Xsubc0Ystrictlyconvex} and of the above proposition, provide the following result.

\begin{prop}[\textrm{\cite{Martin-KNA}}]
Every closed subspace of the complex space $L_1(\mu)$ without the approximation property fails property \Bk.
\end{prop}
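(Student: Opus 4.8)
The plan is to mimic, in the complex-scalar setting, the chain of arguments that produced the analogous statement for strictly convex spaces without the approximation property, replacing strict convexity by complex strict convexity throughout. The two ingredients are: (i) a complex analogue of the geometrical key lemma together with the subspace-of-$c_0$ lemma, giving that $NA(X,Y)\subseteq F(X,Y)$ whenever $X$ is a closed subspace of $c_0$ and $Y$ is complex strictly convex; and (ii) Grothendieck's theorem producing, for any $Y$ without the approximation property, a closed subspace $X$ of $c_0$ with $F(X,Y)$ not dense in $K(X,Y)$.

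First I would establish the complex version of the geometrical argument \eqref{eq:xpmz}. Let $T\in L(X,Y)$ with $\|T\|=1$ attaining its norm at $x_0\in S_X$, and suppose $z\in X$ satisfies $\|x_0+\theta z\|\leq 1$ for every unimodular $\theta\in\C$. Then $\|Tx_0+\theta Tz\|\leq 1$ for every such $\theta$, and since $\|Tx_0\|=1$ while $Y$ is complex strictly convex, the defining property of complex strict convexity forces $Tz=0$. Next I would observe that Lemma~\ref{lemma-c0-noextreme} already supplies, for $X$ a closed subspace of $c_0$ and $x_0\in S_X$, a finite-codimensional subspace $Z=\{z\in X: z(i)=0,\ 1\leq i\leq N\}$ such that every $z\in Z$ with $\|z\|\leq 1/2$ satisfies $\|x_0+\zeta z\|\leq 1$ for \emph{all} scalars $\zeta$ of modulus $\leq 1$ (the proof is literally the same, since $|x_0(i)+\zeta z(i)|\leq |x_0(i)|+|z(i)|<1/2+1/2$ for $i\geq N$ and $=|x_0(i)|<1/2$ for $i<N$). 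Combining these two facts, $T$ kills the finite-codimensional subspace $Z$, hence $T\in F(X,Y)$; this is the promised complex analogue of Corollary~\ref{corollary:Xsubc0Ystrictlyconvex}.

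With this in hand the proposition is immediate: if $Y$ is a closed subspace of the complex $L_1(\mu)$ without the approximation property, then $Y$ is complex strictly convex by \cite[Proposition~3.2.3]{Istra}, and by Grothendieck's theorem \cite[Theorem~18.3.2]{Jarchow} there is a closed subspace $X$ of $c_0$ with $F(X,Y)$ not dense in $K(X,Y)$. Pick $T\in K(X,Y)$ outside $\overline{F(X,Y)}$. By the analogue of Corollary~\ref{corollary:Xsubc0Ystrictlyconvex} just established, $NA(X,Y)\subseteq F(X,Y)\subseteq\overline{F(X,Y)}$, so $T$ cannot be approximated by norm attaining operators, and $Y$ fails property \Bk.

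I expect the only genuinely delicate point to be making sure the complex strict convexity condition is applied in exactly the right form — namely, verifying that the vectors $z$ produced by the $c_0$ argument satisfy $\|x_0+\theta z\|\leq 1$ for \emph{every} unimodular $\theta$ and not merely for $\theta=\pm 1$, which is what permits replacing ordinary strict convexity by the weaker complex notion. Everything else is a routine transcription of the real-case proof, as the paper itself indicates with the phrase ``an obvious adaption''.
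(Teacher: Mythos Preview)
Your proposal is correct and follows exactly the route the paper indicates: replace strict convexity by complex strict convexity, observe that the coordinate computation in Lemma~\ref{lemma-c0-noextreme} already gives $\|x_0+\theta z\|\leq 1$ for every unimodular $\theta$, and then repeat the Grothendieck-theorem step from the previous proposition verbatim. One harmless slip in your parenthetical: for $i\leq N$ you have $z(i)=0$ and hence $|x_0(i)+\zeta z(i)|=|x_0(i)|\leq 1$, not necessarily $<1/2$; the conclusion is unaffected.
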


It is even possible to produce a Banach space $Z$ and a compact endomorphism of $Z$ which cannot be approximated by norm attaining operators.

\begin{example}[\textrm{\cite{Martin-KNA}}]
There exists a Banach space $Z$ and a compact operator from $Z$ into itself which cannot be approximated by norm attaining operators.
\end{example}

This is an immediate consequence of the following lemma, which is proved in \cite[Theorem~8]{Martin-KNA}.

\begin{lemma}[\textrm{\cite{Martin-KNA}}]
Let $X$, $Y$ be Banach spaces and let $Z=X\oplus_\infty Y$. If $NA(Z,Z)\cap K(Z,Z)$ is dense in $K(Z,Z)$ then $NA(X,Y)\cap K(X,Y)$ is dense in $K(X,Y)$.
\end{lemma}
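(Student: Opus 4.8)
The plan is to embed a given compact operator $S\colon X\to Y$ into a compact endomorphism of $Z$, invoke the density hypothesis to approximate it by a norm attaining compact operator $R$ on $Z$, and then extract from $R$ a norm attaining compact operator $X\to Y$ close to $S$. Normalizing $\|S\|=1$ (the case $S=0$ being trivial) and fixing $\varepsilon>0$, I would set $\eta<\min\{\varepsilon/2,1/2\}$ and consider $T\in K(Z,Z)$ given by $T(x,y)=(0,Sx)$; a direct computation gives $\|T\|=\|S\|=1$, and $T$ is compact because $T(B_Z)\subseteq\{0\}\times\overline{S(B_X)}$. By hypothesis there is $R\in NA(Z,Z)\cap K(Z,Z)$ with $\|R-T\|<\eta$.

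Next I would pass to the two ``rows'' of $R$, namely $R_1=P_XR\colon Z\to X$ and $R_2=P_YR\colon Z\to Y$, where $P_X,P_Y$ are the coordinate projections and $J_X,J_Y$ the coordinate inclusions. Since the norm of the range $Z$ is the maximum of its two coordinate norms, one has $\|R\|=\max\{\|R_1\|,\|R_2\|\}$. Because $P_XT=0$ and $TJ_Y=0$, the blocks $C=P_YRJ_X$ and $D=P_YRJ_Y$ satisfy $\|R_1\|<\eta$, $\|D\|<\eta$ and $\|C-S\|<\eta$; in particular $\|R_2\|\geq\|C\|>1-\eta>\eta$, so $\|R\|=\|R_2\|$. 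If $R$ attains its norm at $(u,v)\in S_Z$, then $\|R_1(u,v)\|\leq\|R_1\|<\eta<\|R_2\|$ forces $\|R_2(u,v)\|=\|R_2\|$. Thus the compact operator $R_2(x,y)=Cx+Dy$ attains its norm at $(u,v)$.

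The crux is to manufacture an \emph{exactly} norm attaining operator $X\to Y$ from $R_2$, the obstruction being the small but nonzero block $D$. Here the product structure $B_Z=B_X\times B_Y$ of an $\ell_\infty$-sum is decisive. First I would arrange that the $X$-component of the maximizer has norm one: the convex function $x\mapsto\|Cx+Dv\|$ attains its maximum $\|R_2\|$ over $B_X$ at $u$, so if $\|u\|<1$ it is attained at an interior point and the function is therefore constant equal to $\|R_2\|$ on all of $B_X$; in either case there is $u'\in S_X$ with $\|Cu'+Dv\|=\|R_2\|$. Choosing a norming functional $h\in S_{X^*}$ with $h(u')=1$, I would define
\[
\widehat S(x)=Cx+h(x)\,Dv,
\]
a compact operator (a rank-one perturbation of $C$) with $\|\widehat S-S\|\leq\|Dv\|+\|C-S\|<2\eta<\varepsilon$. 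For every $x\in B_X$ the point $(x,h(x)v)$ lies in $B_Z$, so $\|\widehat S(x)\|=\|R_2(x,h(x)v)\|\leq\|R_2\|$, whence $\|\widehat S\|\leq\|R_2\|$; and $\widehat S(u')=Cu'+Dv=R_2(u',v)$ has norm exactly $\|R_2\|$, so $\widehat S$ attains its norm at $u'\in S_X$. This yields the desired element of $NA(X,Y)\cap K(X,Y)$ within $\varepsilon$ of $S$.

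The main obstacle, as indicated, is upgrading the mere \emph{approximate} attainment enjoyed by the naive block $C=P_YRJ_X$ into genuine norm attainment; the resolution exploits two features unavailable for general sums: the box form of the unit ball of $Z$, which legitimizes the test vectors $(x,h(x)v)$ and the convexity normalization placing the maximizer on $S_X$, and the freedom to absorb the entire defect $Dv$ into a single rank-one correction aligned with the norming functional of $u'$.
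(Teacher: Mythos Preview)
Your proof is correct. The paper does not include its own proof of this lemma, referring instead to \cite[Theorem~8]{Martin-KNA}, so there is no in-paper argument to compare against; your route---embedding $S$ as the off-diagonal block of $T\in K(Z,Z)$, using the $\ell_\infty$-structure of the range to see that the $Y$-row $R_2$ of a norm attaining approximant $R$ itself attains its norm, normalizing via convexity so the maximizer has unit $X$-component, and then absorbing the small defect $Dv$ into a rank-one correction aligned with a norming functional of $u'$---is clean and complete.
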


We finish the section about negative examples with a couple of easy consequences of Lemma~\ref{lemma-LFC} about Lindenstrauss property A.

\begin{prop}
Let $X$ be a infinite-dimensional Banach space whose norm depends upon locally many coordinated. Then $X$ does not have Lindenstrauss property A.
\end{prop}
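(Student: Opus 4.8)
The plan is to deduce this from Lemma~\ref{lemma-LFC} by exhibiting, for a Banach space $X$ whose norm locally depends upon finitely many coordinates, a suitable strictly convex range space $Y$ together with a non--norm-attaining operator into it. The obvious candidate for $Y$ is a strictly convex renorming of a space that $X$ maps into non-compactly; recall from the excerpt that $X$ being infinite-dimensional with norm locally depending on finitely many coordinates forces $X$ to contain an isomorphic copy of $c_0$ (by \cite[Corollary~IV.5]{Godefroy}). So there is a (non-compact, since it is an isomorphism onto an infinite-dimensional subspace) bounded operator $J:X\longrightarrow c_0$ arising as the composition of the coordinate projection of $X$ onto that copy of $c_0$ with the isomorphism.

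First I would fix a strictly convex equivalent norm $|\!|\!|\cdot|\!|\!|$ on $c_0$; this exists by \cite[\S II.2]{DGZ}, and denote the resulting strictly convex space by $Y$. Then the operator $J:X\longrightarrow Y$ is bounded, linear, and non-compact (strict convexity is an isomorphic-invariant-free statement; non-compactness is unaffected by renorming the target). By Lemma~\ref{lemma-LFC}, $NA(X,Y)\subseteq F(X,Y)$, so no non-compact operator from $X$ into $Y$ can attain its norm; in particular $J\notin NA(X,Y)$.

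The remaining point is to upgrade ``$J$ does not attain its norm'' to ``$J$ is not in the closure of $NA(X,Y)$,'' i.e.\ that $NA(X,Y)$ is not dense in $L(X,Y)$, which is what failure of property A requires. This follows because $F(X,Y)$ is a proper closed subspace of $L(X,Y)$: indeed $F(X,Y)\subseteq K(X,Y)$, and $J$ witnesses that $K(X,Y)\neq L(X,Y)$, so $\overline{F(X,Y)}\subseteq K(X,Y)\subsetneq L(X,Y)$ and thus $\overline{F(X,Y)}$ is a proper closed subspace. Since Lemma~\ref{lemma-LFC} gives $NA(X,Y)\subseteq F(X,Y)\subseteq\overline{F(X,Y)}$, we get $\overline{NA(X,Y)}\subseteq\overline{F(X,Y)}\subsetneq L(X,Y)$, so $NA(X,Y)$ is not dense in $L(X,Y)$ and $X$ fails Lindenstrauss property A.

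I do not expect a serious obstacle here: the one thing to be careful about is the passage from ``contains a copy of $c_0$'' to ``admits a non-compact operator into a strictly convex space,'' which just requires composing an isomorphism $c_0\hookrightarrow X$'s left inverse (a projection may not exist, but one does not need a projection — one only needs \emph{some} non-compact operator $X\to c_0$, and the obvious candidate is to note that $c_0$, being a quotient-universal-free situation, any infinite-dimensional subspace $c_0$-copy $E\subseteq X$ gives a non-compact embedding, but to land a non-compact operator \emph{from all of $X$} one uses that $c_0$ is separably injective / the identity on $E$ extends to an operator $X\to c_0$ by Sobczyk's theorem when $X$ is separable, or more simply one invokes that $X$ itself, having norm locally depending on finitely many coordinates, maps onto $c_0$-type pieces). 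In fact the cleanest route, and the one I would actually write, sidesteps this entirely: since $X$ is infinite-dimensional, there is \emph{some} non-compact $T\in L(X,W)$ for \emph{some} Banach space $W$ (e.g.\ $W=X$ and $T=\Id$), but we need the target strictly convex — so instead one simply observes that for the infinite-dimensional $X$ there exists a non-compact operator into a separable strictly convex space (take any non-compact $T:X\to W$ with separable range, renorm the closure of the range strictly convexly), and then Lemma~\ref{lemma-LFC} applies verbatim. This is the same packaging used just above for Proposition~\ref{prop:c0noAP-failsAk}, so the argument is routine.
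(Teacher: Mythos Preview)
Your argument via Lemma~\ref{lemma-LFC} is correct in outline, but it is considerably more roundabout than the paper's two-line proof. The paper simply takes $Y$ to be a strictly convex renorming of $X$ itself and observes that the formal identity $X\to Y$ is non-compact (as $X$ is infinite-dimensional), hence not in $\overline{NA(X,Y)}\subseteq\overline{F(X,Y)}\subseteq K(X,Y)$ by Lemma~\ref{lemma-LFC}. You actually mention this option ($W=X$, $T=\Id$) but then abandon it in search of a \emph{separable} range so as to guarantee the strictly convex renorming; that detour first runs you into the $c_0$-extension issue (which you rightly flag: no projection onto the copy need exist, and Sobczyk requires $X$ separable), and then into asserting without justification that a non-compact operator from $X$ into some separable space always exists. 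The paper's choice sidesteps both difficulties at once; the one-line fix to your write-up is to stop at $Y=$ a strictly convex renorming of $X$ and $T=\Id$.
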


\begin{proof}
Let $Y$ be a strictly convex renorming of $X$. By Lemma~\ref{lemma-LFC}, $NA(X,Y)\subset F(X,Y)$. As $X$ is infinite-dimensional, the isomorphism from $X$ into $Y$ is non-compact and, therefore, it cannot be approximated by norm attaining operators.
\end{proof}

In particular, we get the result for subspaces of $c_0$.

\begin{corollary}
No infinite-dimensional closed subspace of $c_0$ satisfies Lindenstrauss property A.
\end{corollary}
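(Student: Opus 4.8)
The plan is to obtain this as a direct specialization of the Proposition just stated, using the classical fact that subspaces of $c_0$ carry a norm that locally depends upon finitely many coordinates. So let $X$ be an infinite-dimensional closed subspace of $c_0$. The norm of $c_0$ locally depends upon finitely many coordinates by \cite[Proposition~III.3]{Godefroy} (indeed, the computation in the proof of Lemma~\ref{lemma-c0-noextreme} already exhibits, for each $x_0$, the finite set of coordinate functionals and the continuous function controlling the norm nearby), and this property is inherited by the closed subspace $X$. Hence $X$ satisfies the hypothesis of the preceding Proposition and therefore fails Lindenstrauss property A.

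If one prefers a self-contained argument not invoking the Proposition, I would instead argue directly from Corollary~\ref{corollary:Xsubc0Ystrictlyconvex}. Since $X\subseteq c_0$ is separable, it admits an equivalent strictly convex norm; call $Y$ the resulting Banach space (see \cite[\S II.2]{DGZ}). Corollary~\ref{corollary:Xsubc0Ystrictlyconvex} then yields $NA(X,Y)\subseteq F(X,Y)$, so $\overline{NA(X,Y)}\subseteq\overline{F(X,Y)}\subseteq K(X,Y)$. The formal identity $\Id\colon X\longrightarrow Y$ is an isomorphism between infinite-dimensional spaces, hence it is not compact; thus $\Id\notin\overline{NA(X,Y)}$, which shows that $NA(X,Z)$ is not dense in $L(X,Z)$ for $Z=Y$. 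This is precisely the failure of property A for $X$.

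I do not anticipate any real obstacle: every ingredient is already in place --- the geometrical key lemma (Lemma~\ref{lemma:geometrical-key}), the finite-codimensionality of the span of the admissible perturbations for subspaces of $c_0$ (Lemma~\ref{lemma-c0-noextreme}), the existence of strictly convex renormings of separable spaces, and the elementary fact that the unit ball of an infinite-dimensional normed space is not relatively compact, so that an isomorphism between two such spaces is never compact. The only point deserving a line of explanation is this last observation, and it is entirely standard.
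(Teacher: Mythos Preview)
Your proposal is correct, and your first paragraph is exactly how the paper obtains the corollary: it is stated as an immediate particular case of the preceding Proposition, using that closed subspaces of $c_0$ inherit the locally-finitely-many-coordinates property (cited there as \cite[Proposition~III.3]{Godefroy}). Your alternative self-contained argument via Corollary~\ref{corollary:Xsubc0Ystrictlyconvex} is also correct and is in fact the specialization to $c_0$ of the very proof the paper gives for the Proposition itself (replacing Lemma~\ref{lemma-LFC} by its $c_0$-specific precursor), so both routes are essentially the paper's.
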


Let us observe that this result solves in the negative Question~13 of \cite{Martin-KNA} as it is written there. As we have done here, it can be solved using arguments from that paper. But, actually, there is an erratum in the statement of this question and the exact question that the author wanted to propose is about property \Ak\, (see Question~\ref{question:c0-metric}).

\section{Positive results on domain spaces}\label{sec:domain}

As we commented in the introduction, every compact operator whose domain is reflexive attains its norm. In particular, reflexive spaces have property \Ak. To get more examples, we first recall that even it is not known whether Lindenstrauss property A implies property \Ak (Question~\ref{question:A=>Ak}), the usual way to prove property A for a Banach space $X$ is by showing that every operator from $X$ can be approximated by compact perturbations of it attaining the norm. Therefore, the known examples of spaces with property A actually have property \Ak. The main examples of this kind are spaces with the Radon-Nikod\'{y}m property (J.~Bourgain 1977 \cite{Bourgain}) and those with property $\alpha$ (W.~Schachermayer 1983 \cite{Schacher-alpha}). Let us start with the Radon-Nikod\'{y}m property, which does not need presentation as it is one of the classical properties studied in geometry of Banach spaces. Let us just recall that reflexive spaces and $\ell_1$ have it. Bourgain's result is much deeper than what we are going to present here and the paper also contains a kind of converse result. The paper \cite{Bourgain} is consider one of the cornerstones in the theory of norm attaining operators and connects this theory with the geometric concept of dentability. The proof given by Bourgain is based on a variational principle introduced in the same paper and approximates every operator by nuclear (hence compact) perturbations of it attaining the norm. The variational principle  was extended to the non-linear case in 1978 by C.~Stegall \cite{Stegall1978} getting rank-one perturbations. We are not going to present the proof here; we refer the reader to the 1986 paper \cite{Stegall1986} of C.~Stegall for a simpler proof and applications.

\begin{theorem}[Bourgain]\label{teo-Bourgain}
The Radon-Nikod\'{y}m property implies property \Ak.
\end{theorem}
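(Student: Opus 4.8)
The plan is to observe that no genuinely new argument is needed: Bourgain's proof that the Radon-Nikod\'{y}m property implies Lindenstrauss property~A already delivers the stronger conclusion, essentially for free. Recall from the discussion above that this proof is constructive in a strong sense: given a Banach space $X$ with the Radon-Nikod\'{y}m property, an arbitrary Banach space $Z$, an operator $T\in L(X,Z)$, and $\varepsilon>0$, it produces $S\in NA(X,Z)$ with $\|S-T\|<\varepsilon$ and with the perturbation $S-T$ \emph{nuclear} (and, by Stegall's refinement, one may even take $S-T$ of rank one). Since nuclear operators --- a fortiori finite-rank operators --- are compact, adding such a perturbation leaves the subspace $K(X,Z)$ invariant. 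Hence, if we simply restrict attention to those $T$ that happen to be compact, the approximant $S=T+(S-T)$ is again compact, so $S\in K(X,Z)\cap NA(X,Z)$; as $Z$ is arbitrary, this says exactly that $X$ has property \Ak.

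Thus the real content lies inside Bourgain's argument, in the fact that the perturbation can be taken compact, and that is the step I would have to spell out for a self-contained proof. The engine is a variational principle of Bishop--Phelps type adapted to operators. Starting from $T$ (normalised so that $\|T\|=1$), one builds inductively a sequence of operators $T=T_0,T_1,T_2,\dots$, where $T_{n+1}=T_n+\lambda_n\,x_n^*\otimes y_n$ is a small rank-one perturbation of $T_n$ chosen so that $T_{n+1}$ is closer to attaining its norm and $\sum_n|\lambda_n|<\varepsilon$; the Radon-Nikod\'{y}m property of $X$ --- equivalently, the dentability of bounded subsets of $X$ --- is precisely what forces the associated sequence of near-maximising points of $B_X$ to converge, so that $S:=\lim_n T_n$ exists, attains its norm at the limit point, and satisfies $\|S-T\|\le\sum_n|\lambda_n|<\varepsilon$. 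The total perturbation $S-T=\sum_n\lambda_n\,x_n^*\otimes y_n$ is then a norm-convergent series of rank-one operators, hence nuclear and in particular compact --- which is all that was used above.

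The main obstacle is therefore not the passage from property~A to property \Ak, which is a two-line remark once one reads the proof rather than merely its statement, but the bookkeeping inside the variational scheme: one must keep each perturbation finite-rank and its coefficient summable, one must use dentability to get actual convergence of the optimising points (approximate attainment alone would not survive the limit), and one must verify that the limiting operator indeed attains its norm. All of this is carried out in \cite{Bourgain} and streamlined in \cite{Stegall1978,Stegall1986}, so in practice I would invoke those results in the form ``every bounded operator is approximated by nuclear perturbations of itself attaining the norm'' and conclude the theorem in the two lines indicated.
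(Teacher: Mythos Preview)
Your proposal is correct and matches the paper's own treatment exactly: the paper does not give an independent proof but simply observes that Bourgain's argument (and Stegall's refinement) approximates every operator by \emph{nuclear}---hence compact---perturbations of itself attaining the norm, and refers to \cite{Bourgain,Stegall1978,Stegall1986} for details. Your two-line reduction (compact $+$ compact perturbation $=$ compact, norm-attaining) is precisely the point the paper is making.
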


Let us introduce the definition of property $\alpha$, probably less known. A Banach space $X$ has \emph{property $\alpha$} if there are two sets $\{ x_i\,:\, i \in I\} \subset S_X$, $\{ x^{*}_i\,:\, i \in I \} \subset S_{X^*}$ and a constant $0 \leq \rho <1$ such that the following conditions hold:
\begin{enumerate}
\item[$(i)$] $x^{*}_i(x_i)=1$, $\forall i \in I $.
\item[$(ii)$] $|x^*_i(x_j)|\leq \rho <1$ if $i, j \in I, i \ne j$.
\item[$(iii)$] $B_X$ is the absolutely closed convex hull of $\{x_i\,:\, i\in I\}$.
\end{enumerate}
This property was introduced by W.~Schachermayer \cite{Schacher-alpha} as a strengthening of a property used by  J.~Lindenstrauss in the seminal paper \cite{Lindens}. We refer to \cite{Moreno} and references therein for more information and background. The prototype of Banach space with property $\alpha$ is $\ell_1$.

\begin{prop}[Schachermayer]\label{prop-alpha}
Property $\alpha$ implies property \Ak.
\end{prop}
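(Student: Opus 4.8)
The plan is to mimic the classical Schachermayer argument for Lindenstrauss property A, checking that the perturbations produced are always compact, so that the same proof yields property \Ak. Let $Z$ be an arbitrary Banach space and fix $T \in K(X,Z)$ with $\|T\|=1$ and $\varepsilon>0$; we want a norm attaining compact operator within $\varepsilon$ of $T$. Using the family $\{x_i : i \in I\} \subset S_X$ and $\{x_i^* : i \in I\} \subset S_{X^*}$ with constant $\rho<1$ from property $\alpha$, the idea is to look at the scalars $\|T x_i\|$, pick $i_0$ with $\|T x_{i_0}\|$ close to the supremum $1$, and then define a perturbation that pushes $T$ slightly in the direction that makes $x_{i_0}$ (or some $x_i$) an actual norm-attaining point. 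Concretely one sets, for a small parameter $\delta>0$,
\[
S = T + \delta\, \|T x_{i_0}\|\, (T x_{i_0})\otimes x_{i_0}^*,
\]
i.e. $S(x) = T(x) + \delta \|T x_{i_0}\|\, x_{i_0}^*(x)\, T x_{i_0}$, which is a rank-one perturbation of $T$ and hence compact.

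The key steps, in order, would be: (1) record the elementary consequences of property $\alpha$, in particular that $\|y\| = \sup_i |x_i^*(y)|$-type control is available, so that the operator norm of any $R \in L(X,Z)$ equals $\sup_i \|R x_i\|$ up to the absolute convex hull condition $(iii)$; (2) given $T$, choose $i_0 \in I$ with $\|T x_{i_0}\| > 1 - \eta$ for a suitably small $\eta = \eta(\varepsilon,\rho,\delta)$; (3) form the rank-one perturbation $S$ above and estimate $\|S - T\| \le \delta$, so choosing $\delta < \varepsilon$ handles the approximation; (4) show that $\|S\|$ is attained at $x_{i_0}$: evaluate $\|S x_{i_0}\| = (1 + \delta)\|T x_{i_0}\|$, and for $j \neq i_0$ use $(ii)$ to bound $\|S x_j\| \le \|T x_j\| + \delta\|T x_{i_0}\|\,|x_{i_0}^*(x_j)| \le 1 + \delta\rho\|T x_{i_0}\|$; since $1 + \delta\rho\|T x_{i_0}\| < (1+\delta)\|T x_{i_0}\|$ precisely when $\|T x_{i_0}\|$ is close enough to $1$ (because $\rho<1$), one gets $\sup_{j} \|S x_j\| = \|S x_{i_0}\|$; then $(iii)$ promotes this to $\|S\| = \|S x_{i_0}\|$, so $S \in NA(X,Z)$; (5) observe $S$ is compact since $T$ is and $S - T$ has rank one, so $S \in K(X,Z) \cap NA(X,Z)$, completing the proof.

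The main obstacle I anticipate is step (4), specifically making the inequality $1 + \delta\rho\|T x_{i_0}\| < (1+\delta)\|T x_{i_0}\|$ robust: this requires $\|T x_{i_0}\| > \frac{1}{1 + \delta(1-\rho)}$, which is only slightly less than $1$, so one must choose $\eta$ small depending on both $\delta$ and $\rho$, and then go back and make sure $\delta$ was chosen first (smaller than $\varepsilon$) and $\eta$ second. There is also a small subtlety in passing from $\sup_{j \in I}\|S x_j\|$ being attained to $\|S\|$ being attained at a point of $S_X$: condition $(iii)$ gives that $B_X$ is the absolutely closed convex hull of the $x_i$, so $\|S\| = \sup_i \|S x_i\|$, and since this sup is actually a max at $x_{i_0}$, the norm is attained there — one should state this cleanly, perhaps as a preliminary remark, to avoid cluttering the main estimate. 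Everything else (the norm estimate in step (3), the compactness observation in step (5)) is routine. Since the perturbation is rank-one, the same argument in fact shows that $T$ can be approximated by norm-attaining operators differing from $T$ by a rank-one operator, which is the standard phenomenon noted in the introduction explaining why property $\alpha$ implies property \Ak\ and not merely property A.
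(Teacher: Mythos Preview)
Your approach is essentially identical to the paper's: a rank-one perturbation along $x_{i_0}^*\otimes T x_{i_0}$, using condition $(iii)$ to reduce $\|S\|$ to $\sup_i\|S x_i\|$ and condition $(ii)$ to separate $i_0$ from the other indices. The paper writes the perturbation simply as $Sx = Tx + \varepsilon\, x_{i_0}^*(x)\,T x_{i_0}$ (without your extra $\|T x_{i_0}\|$ factor) and picks the threshold $\|T x_{i_0}\|>\|T\|(1+\varepsilon\rho)/(1+\varepsilon)$ at the outset, which streamlines the bookkeeping --- note incidentally that with your definition one gets $\|S x_{i_0}\|=(1+\delta\|T x_{i_0}\|)\|T x_{i_0}\|$, not $(1+\delta)\|T x_{i_0}\|$, a harmless slip.
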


\begin{proof}
Let $X$ be a Banach space having property $\alpha$ with constant $\rho\in [0,1)$, and let $Y$ be a Banach space. Fix $T\in K(X,Y)$, $T\neq 0$, and $\varepsilon>0$. We find $i\in I$ such that
$$
\|T x_{i}\|> \frac{\|T\|(1+\varepsilon\rho)}{1+\varepsilon}
$$
and define $S\in K(X,Y)$ by
$$
S x = T x + \varepsilon x_{i}^*(x)Tx_{i} \qquad (x\in X).
$$
Then $\|S x_{i}\|> \|T\|(1+\varepsilon\rho)$, while $\|S x_j\|\leq \|T\|(1 + \varepsilon \rho)$ for every $j\neq i$. This gives that $S\in NA(X,Y)$ and it is clear that $\|T-S\|\leq \varepsilon\|T\|$.
\end{proof}

The main utility of property $\alpha$ is that many Banach spaces can be renormed with property $\alpha$ (B.~Godun and S.~Troyanski 1983 \cite{Godun-Troyanski}, previous results by W.~Schachermayer \cite{Schacher-alpha}), so we obtain that property \Ak\, has no isomorphic consequences in most cases.

\begin{corollary}
Every Banach space $X$ with a biorthogonal system whose cardinality is equal to the density character of $X$ can be equivalently renormed to have property \Ak. In particular, this happens if $X$ is separable.
\end{corollary}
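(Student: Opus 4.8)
The plan is to combine the previous proposition with a known renorming theorem, so that essentially no new argument is needed. First I would note that property \Ak\ is a property of a fixed norm, so that ``$X$ can be renormed to have property \Ak'' just means that $X$ admits \emph{some} equivalent norm with property \Ak. By Proposition~\ref{prop-alpha}, any equivalent norm on $X$ enjoying property $\alpha$ automatically has property \Ak. Hence it is enough to show that every space $X$ as in the statement carries an equivalent norm with property $\alpha$.

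Second, I would invoke the renorming result of B.~Godun and S.~Troyanski \cite{Godun-Troyanski} announced just before the statement: if a Banach space $X$ possesses a biorthogonal system $\{(x_i,x_i^*)\}_{i\in I}\subset X\times X^*$ with $|I|$ equal to the density character of $X$, then $X$ admits an equivalent norm with property $\alpha$. Together with the first step this gives at once that such an $X$ has an equivalent norm with property \Ak, which is the assertion of the corollary.

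Third, for the ``in particular'' clause I would use the classical fact that every separable Banach space admits a fundamental and total biorthogonal system (a Markushevich basis), whose cardinality $\aleph_0$ coincides with the density character of the space; thus the general statement applies and yields property \Ak\ after renorming. The only real point of the proof is to quote the precise form of the Godun--Troyanski theorem matching the hypothesis; once that is available, everything reduces to a one-line combination with Proposition~\ref{prop-alpha}, with no computation to carry out. Accordingly, I expect no genuine obstacle beyond correctly attributing and citing the external renorming result.
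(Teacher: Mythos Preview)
Your proposal is correct and matches the paper's approach exactly: the corollary is stated immediately after the remark that Godun--Troyanski's renorming theorem yields property~$\alpha$, and is meant to follow at once from combining that result with Proposition~\ref{prop-alpha}. The paper gives no explicit proof, so your write-up (including the Markushevich-basis justification for the separable case) is, if anything, more detailed than what appears there.
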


Let us comment that property \Ak\ for a Banach space $X$ does not imply that for every Banach space $Y$, norm attaining finite-rank operators from $X$ into $Y$ are dense in $K(X,Y)$. Indeed, by the above, all reflexive spaces have property \Ak\,, while there are reflexive spaces whose duals fail the approximation property (even subspaces of $\ell_p$ for $p\neq 2$).

Let us pass to discuss on results which are specific of property \Ak\, and do not follow from property A. All results we know of this kind follow from the same general principle: an stronger version of the approximation property of the dual, and so give partial answers to Question~\ref{question-dual-AP-Ak}. The argument appeared in the 1979 paper by J.~Johnson and J.~Wolfe \cite[Lemma~3.1]{JoWo}. An (easy) proof can be found in \cite[Proposition~11]{Martin-KNA}.

\begin{proposition}[Johnson-Wolfe]\label{prop-suficiente}
Let $X$ be a Banach space. Suppose there is a net $(P_\alpha)$ of finite-rank contractive projections on $X$ such that $(P_\alpha^*)$ converges to $\Id$ in the strong operator topology (i.e.\ for every $x^*\in X^*$, $(P_\alpha^* x^*)\longrightarrow x^*$ in norm). Then $X$ has property \Ak.
\end{proposition}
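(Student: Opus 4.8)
The plan is to approximate an arbitrary $T\in K(X,Z)$ in operator norm by the finite-rank operators $TP_\alpha$, and then to observe that each $TP_\alpha$ attains its norm; since a finite-rank operator is automatically compact, this produces the required approximants in $NA(X,Z)\cap K(X,Z)$.

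The first step is to show $\|T-TP_\alpha\|\to 0$, and this is the only place the hypothesis is used. I would pass to adjoints. Since $T$ is compact, $T^*\colon Z^*\to X^*$ is compact by Schauder's theorem, so $\overline{T^*(B_{Z^*})}$ is a compact subset of $X^*$. A bounded net converging in the strong operator topology converges uniformly on compact sets (the usual covering argument works verbatim for nets, using that the index set is directed); applying this to $(P_\alpha^*)$ — which is bounded by $1$ because the $P_\alpha$ are contractive, and which by hypothesis converges to $\Id_{X^*}$ in SOT — yields $\|P_\alpha^*T^*-T^*\|=\sup_{z^*\in B_{Z^*}}\|P_\alpha^*T^*z^*-T^*z^*\|\to 0$. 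As $(TP_\alpha)^*=P_\alpha^*T^*$ and an operator and its adjoint have the same norm, this is exactly $\|TP_\alpha-T\|\to 0$.

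Given $\varepsilon>0$ I would then fix $\alpha$ with $\|T-TP_\alpha\|<\varepsilon$; we may assume $TP_\alpha\neq 0$ (this holds eventually, and the case $T=0$ is trivial). It remains to check $TP_\alpha\in NA(X,Z)$. Set $X_\alpha=P_\alpha(X)$, a finite-dimensional subspace of $X$. Using that $P_\alpha$ is a contractive projection onto $X_\alpha$ one sees at once that $P_\alpha(B_X)=B_X\cap X_\alpha=B_{X_\alpha}$: the inclusion $\subseteq$ is contractivity, and $\supseteq$ holds because $P_\alpha$ fixes $X_\alpha$ pointwise. Hence $\|TP_\alpha\|=\sup_{x\in B_X}\|T(P_\alpha x)\|=\sup_{y\in B_{X_\alpha}}\|Ty\|$, and since $B_{X_\alpha}$ is compact and $y\mapsto\|Ty\|$ is continuous, this supremum is attained at some $y_0\in B_{X_\alpha}$, with $\|y_0\|=1$ by homogeneity once $TP_\alpha\neq 0$. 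As $y_0\in X_\alpha$ we have $P_\alpha y_0=y_0$, so $\|TP_\alpha y_0\|=\|Ty_0\|=\|TP_\alpha\|$ with $y_0\in S_X$, as desired.

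The only genuinely non-formal point is the norm convergence $TP_\alpha\to T$: the hypothesis controls $P_\alpha^*$ rather than $P_\alpha$, so the natural move is to dualize, and compactness of $T$ is precisely what upgrades the pointwise (SOT) convergence of $(P_\alpha^*)$ to norm convergence of $(TP_\alpha)^*=P_\alpha^*T^*$. The rest is forced: contractivity of the projections makes $P_\alpha(B_X)$ equal the compact unit ball of the finite-dimensional range $X_\alpha$, which is why $TP_\alpha$, a finite-rank operator, actually attains its norm — something a general finite-rank operator need not do.
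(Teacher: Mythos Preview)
Your proof is correct and follows essentially the same approach as the paper's sketch: the paper also argues that $TP_\alpha$ attains its norm because $TP_\alpha(B_X)=T(B_{P_\alpha(X)})$ with $B_{P_\alpha(X)}$ compact, and that $(TP_\alpha)\to T$ by using compactness of $T$ to show $P_\alpha^*T^*\to T^*$. You have simply filled in the details of the latter step (Schauder's theorem plus uniform-on-compacts convergence of bounded SOT-convergent nets), which the paper leaves to the reader.
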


\begin{proof}[Sketch of the proof]
Let $Y$ be a Banach space and consider $T\in K(X,Y)$. First, all operators $T P_\alpha$ attain their norm since $TP_\alpha(B_X)=T(B_{P_\alpha(X)})$ and $B_{P_\alpha(X)}$ is compact. By using the compactness of $T$, it can be easily proved that $(P_\alpha^*T^*)\longrightarrow T^*$, so  $(TP_\alpha)\longrightarrow T$.
\end{proof}

This result was used in the cited paper \cite{JoWo} to get examples of spaces with property \Ak. In \cite[Proposition~3.2]{JoWo} it is shown that real $C(K)$ spaces have the property given in Proposition~\ref{prop-suficiente}, but the proof easyly extends to real or complex $C_0(L)$ spaces.

\begin{example}[Johnson-Wolfe]\label{exa:AkC_0}
For every locally compact space Hausdorff space $L$, the space $C_0(L)$ has property \Ak.
\end{example}

In 1976, J.~Diestel and J.~Uhl \cite{Diestel-Uhl-Rocky} showed that $L_1(\mu)$ spaces have property \Ak.

\begin{example}[Diestel-Uhl]\label{exa:AkL_1}
For every positive measure $\mu$, the space $L_1(\mu)$ has property \Ak.
\end{example}

If the measure is finite, the above result also follows from Proposition~\ref{prop-suficiente}. The general case can be obtained from the finite measure case using the following lemma, which is just the immediate adaptation to compact operators of \cite[Lemma~2]{PayaSaleh}.

\begin{lemma} \label{prop-sumas}
Let $\{X_i\,:\,i\in I\}$ be a non-empty family of Banach spaces and let $X$ denote the $\ell_1$-sum of the family. Then $X$ has property \Ak\ if and only if $X_i$ does for every $i\in I$.
\end{lemma}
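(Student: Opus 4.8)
The plan is to prove the equivalence by handling the two implications separately, reducing everything to the fact that norm-attaining compact operators on $X$ can be built coordinatewise from the pieces $X_i$ and conversely decomposed.

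For the ``only if'' direction, suppose $X=\bigl(\bigoplus_{i\in I} X_i\bigr)_{\ell_1}$ has property \Ak\ and fix a single index $i_0\in I$. Given a Banach space $Z$ and a compact operator $T\in K(X_{i_0},Z)$, I would consider the natural projection $\pi_{i_0}:X\longrightarrow X_{i_0}$ (a norm-one operator whose restriction to the $i_0$-th summand is the identity) and the composition $\widetilde{T}=T\circ\pi_{i_0}\in K(X,Z)$. By hypothesis $\widetilde T$ is approximated by some $S\in NA(X,Z)\cap K(X,Z)$; say $S$ attains its norm at $x=(x_i)\in S_X$. The point is that since the norm on $X$ is the $\ell_1$-sum, $x$ is an ``$\ell_1$-combination'' of norm-one vectors from the summands, and one checks that $S$ must then attain its norm at (a normalization of) one of those summand vectors, or more simply that the restriction $S|_{X_{i_0}}$ is close to $T$ and the behaviour of $S$ on the $i_0$-th coordinate is governed by a genuine norm-attainment there. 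The cleanest route is: $\|\widetilde T-S\|$ small forces $\|T-S|_{X_{i_0}}\|$ small; and a short argument using the $\ell_1$-structure shows that after a further arbitrarily small perturbation one may assume $S$ attains its norm at a vector supported on $X_{i_0}$, whence $S|_{X_{i_0}}\in NA(X_{i_0},Z)\cap K(X_{i_0},Z)$ approximates $T$. This gives property \Ak\ for each $X_i$.

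For the ``if'' direction, assume every $X_i$ has property \Ak. Fix $Z$, a compact operator $T\in K(X,Z)$ with $\|T\|=1$, and $\varepsilon>0$. The first step is a finite-support reduction: since $T$ is compact and $X$ carries the $\ell_1$-norm, the image $T(B_X)$ is essentially controlled by finitely many coordinates, so one can find a finite set $J\subseteq I$ and replace $T$ by $T\circ P_J$ (where $P_J$ is the canonical norm-one projection onto $\bigl(\bigoplus_{i\in J} X_i\bigr)_{\ell_1}$) up to an error $<\varepsilon/3$; this uses exactly the kind of argument in the sketch of Proposition~\ref{prop-suficiente}, applied to the contractive projections $P_J$. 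Thus it suffices to treat $X=\bigl(\bigoplus_{i\in J}X_i\bigr)_{\ell_1}$ with $J$ finite. Now one would choose, among the finitely many indices $i\in J$, one index where $\|T|_{X_i}\|$ is nearly maximal, i.e.\ nearly equal to $\|T\|$; call it $i_0$. Then apply property \Ak\ of $X_{i_0}$ to approximate $T|_{X_{i_0}}$ by a compact $R\in NA(X_{i_0},Z)$ with $\|R\|$ as close as we like to $\|T|_{X_{i_0}}\|$, and build $S\in K(X,Z)$ by setting $S=R$ on the $i_0$-th summand and $S=T$ (or a slightly scaled-down copy of $T$) on the remaining summands, glued via the $\ell_1$-decomposition $S(x_i)_{i\in J}=\sum_{i\neq i_0} T(x_i) + R(x_{i_0})$. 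Because the norm of an element of $X$ is the sum $\sum\|x_i\|$, and $R$ attains its norm while its norm dominates (after a harmless rescaling of the other blocks by a factor slightly less than $1$) the norms of all the other blocks $T|_{X_i}$, the operator $S$ attains its norm at the point supported on $X_{i_0}$ where $R$ does. Tracking the three sources of error (finite-support truncation, the rescaling of the non-$i_0$ blocks, and the $R$-approximation) and choosing each small shows $\|T-S\|<\varepsilon$.

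The main obstacle I expect is the rescaling/gluing bookkeeping in the ``if'' direction: one must arrange that the chosen block $i_0$ genuinely dominates, so that $S$ really does attain its norm there and not ``at infinity'' across the other blocks. This is handled by the finite-support reduction (so there are only finitely many competing blocks, and a maximum is attained among them) together with multiplying the non-$i_0$ blocks by a scalar $(1-\delta)$ for a tiny $\delta>0$, which strictly separates $\|R\|$ from the other block norms at negligible cost; the compactness of $T$ is what makes the finite-support reduction legitimate. The ``only if'' direction is easier but still requires the small lemma that an operator on an $\ell_1$-sum which is close to one factored through a single coordinate can, after an arbitrarily small perturbation, be taken to attain its norm on that coordinate — this too is where the $\ell_1$-geometry (extreme points of $B_X$ live in the summands' images under the embeddings, up to the obvious scalar multiples) does the work.
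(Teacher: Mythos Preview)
The paper does not actually prove this lemma; it merely says that it ``is just the immediate adaptation to compact operators of \cite[Lemma~2]{PayaSaleh}''. So there is nothing to compare against beyond checking whether your argument is sound.

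Your ``only if'' direction is fine, and in fact cleaner than you fear: no extra perturbation is needed. With $\widetilde T=T\circ\pi_{i_0}$ and $\|S-\widetilde T\|<\varepsilon<\|T\|/2$, one has $\|S|_{X_i}\|\leq\varepsilon$ for $i\neq i_0$ while $\|S|_{X_{i_0}}\|\geq\|T\|-\varepsilon>\varepsilon$, so $\|S\|=\|S|_{X_{i_0}}\|$. If $S$ attains its norm at $x=(x_i)\in S_X$, the chain $\|Sx\|\leq\sum_i\|S|_{X_i}\|\,\|x_i\|\leq\|S\|$ forces $x_i=0$ for $i\neq i_0$, hence $S|_{X_{i_0}}\in NA(X_{i_0},Z)\cap K(X_{i_0},Z)$ and $\|S|_{X_{i_0}}-T\|<\varepsilon$.

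Your ``if'' direction contains a genuine error: the finite-support reduction fails. The projections $P_J$ on an $\ell_1$-sum do \emph{not} satisfy the hypothesis of Proposition~\ref{prop-suficiente}, because $P_J^*$ acts on $X^*=\bigl(\bigoplus X_i^*\bigr)_{\ell_\infty}$ and does not converge strongly to the identity there. Concretely, take $X=\ell_1=\bigl(\bigoplus_{n\in\N}\R\bigr)_{\ell_1}$ and the rank-one (hence compact) functional $T(x)=\sum_n x_n$; then $\|T-T\circ P_J\|=1$ for every finite $J$, so $T$ cannot be approximated by operators supported on finitely many blocks.

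Fortunately the reduction is unnecessary. Since $\|T\|=\sup_i\|T|_{X_i}\|$, pick $i_0$ with $\|T|_{X_{i_0}}\|>\|T\|-\delta$, use property \Ak\ of $X_{i_0}$ to get $R\in NA(X_{i_0},Z)\cap K(X_{i_0},Z)$ with $\|R-T|_{X_{i_0}}\|<\delta$, and define $S$ to be $\lambda R$ on $X_{i_0}$ (with $\lambda\geq 1$ chosen so that $\lambda\|R\|=\|T\|$) and $T|_{X_i}$ on every other block. Then $\|S\|=\lambda\|R\|\geq\|T|_{X_i}\|$ for all $i$, so $S$ attains its norm where $R$ does, and a routine estimate gives $\|S-T\|<3\delta$. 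No finiteness is required because you only need one block to dominate the supremum, not to exhaust it.
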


Compare Examples \ref{exa:AkC_0} and \ref{exa:AkL_1} with the result by W.~Schachermayer that $NA(L_1[0,1],C[0,1])$ is not dense in $L(L_1[0,1],C[0,1])$ \cite{Schachermayer-classical}.

More examples of spaces with property \Ak\ can be derived from Proposition~\ref{prop-suficiente}. We start with a new result about preduals of $\ell_1$.

\begin{corollary}
Let $X$ be a Banach space such that $X^*$ is isometrically isomorphic to $\ell_1$. Then $X$ has property \Ak.
\end{corollary}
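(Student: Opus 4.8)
The plan is to reduce the statement to the Johnson--Wolfe criterion, Proposition~\ref{prop-suficiente}: it suffices to produce a net $(Q_n)$ of finite-rank contractive projections on $X$ whose adjoints $(Q_n^*)$ converge to $\Id$ in the strong operator topology of $X^*$. I would build such a net from a decomposition of $X$ as an increasing union of $1$-complemented finite-dimensional subspaces.

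First I would collect the relevant structural facts. Since $X^*$ is isometric to $\ell_1$, the space $X$ is separable, is an $L_1$-predual with separable dual, and does not contain $\ell_1$ (it is in fact Asplund, as $\ell_1$ has the Radon--Nikod\'ym property); moreover $\ell_1$ has a monotone basis, hence the metric approximation property, so by Grothendieck's theorem $X$ has the metric approximation property as well. Next I would invoke the classical representation of separable $L_1$-preduals (Lazar--Lindenstrauss): $X$ is the closure of an increasing sequence $G_1\subseteq G_2\subseteq\cdots$ of finite-dimensional subspaces, each isometric to some $\ell_\infty^{d}$. Because $\ell_\infty^{d}$ is a $1$-injective space, every $G_n$ is $1$-complemented in $X$, and one can choose norm-one projections $Q_n\colon X\to G_n$ that are compatible, so that $Q_nQ_m=Q_{\min(n,m)}$; since $\bigcup_n G_n$ is dense, $Q_nx\to x$ for every $x\in X$, and each $Q_n$ is a finite-rank contractive projection. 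The key additional requirement is that this decomposition be \emph{shrinking}, i.e.\ that $Q_n^*x^*\to x^*$ in norm for every $x^*\in X^*$; granting that, Proposition~\ref{prop-suficiente} applies verbatim and yields property \Ak.

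The hard part is the shrinking property, which is genuinely part of the construction and not automatic: merely not containing $\ell_1$ does not suffice, since there exist monotone decompositions of preduals of $\ell_1$ that are not shrinking --- for instance the summing basis realizes $c_0$, whose dual is $\ell_1$, with a monotone but non-shrinking basis. One must therefore build the subspaces $G_n$ so that no block sequence of the decomposition behaves like a summing basis, exploiting the separability of $X^*=\ell_1$; alternatively, the Johnson--Rosenthal--Zippin theorem produces a shrinking finite-dimensional decomposition out of the metric approximation property together with the separability of $X^*$, and one then uses the $L_1$-predual structure to arrange that the projections are contractive. Once a monotone shrinking decomposition of $X$ is in hand, the conclusion follows immediately from Proposition~\ref{prop-suficiente}.
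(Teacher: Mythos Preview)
Your overall strategy coincides with the paper's: both aim at the Johnson--Wolfe criterion (Proposition~\ref{prop-suficiente}), so the whole problem reduces to producing finite-rank contractive projections on $X$ whose adjoints converge strongly to $\Id_{X^*}$. The difference lies in \emph{where} the projections are built. You work on the primal side, using the Lazar--Lindenstrauss representation of $X$ as an increasing union of $\ell_\infty^{d_n}$-subspaces and then trying to force the resulting decomposition to be shrinking. The paper works on the dual side: it takes the $\ell_1$-basis $(x_n^*)$ of $X^*$ and invokes a result of Gasparis (Corollary~4.1 of \cite{Gasparis}) which constructs, directly, $w^*$-continuous contractive projections $Q_n:X^*\to X^*$ onto $\mathrm{span}\{x_1^*,\dots,x_n^*\}$. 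The $w^*$-continuity means each $Q_n$ is the adjoint of a finite-rank contractive projection on $X$, and since $(x_n^*)$ is a basis of $X^*$, strong convergence $Q_n\to\Id_{X^*}$ is automatic. So the paper obtains in one stroke exactly the net you need, with the shrinking property built in by construction.

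In your route, the shrinking step is a genuine gap. You correctly flag it as the hard part and correctly observe that it is not automatic (your summing-basis example is apt). But neither of your two proposed fixes is carried out: ``build the $G_n$ so that no block sequence behaves like a summing basis'' is a description of the goal, not a construction; and the Johnson--Rosenthal--Zippin route yields a shrinking FDD, but there is no obvious mechanism to ``use the $L_1$-predual structure to arrange that the projections are contractive'' after the fact --- JRZ does not hand you $\ell_\infty^{d_n}$-ranges, and interlacing a JRZ decomposition with a Lazar--Lindenstrauss one while preserving both monotonicity and the shrinking property is itself a nontrivial construction that you do not supply. In effect, the missing ingredient in your argument is precisely what Gasparis' result provides: a way to get contractive projections on $X$ whose adjoints are the canonical basis projections of $\ell_1$. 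Citing that result (or reproducing its argument) closes your gap and makes the two proofs essentially the same.
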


\begin{proof}
Let $(x_n^*)_{n\in \N}$ be a Schauder basis of $X^*$ isometrically equivalent to the usual $\ell_1$-basis and for every $n\in \N$, let $Y_n$ the linear span of $\{x_1^*,\ldots,x_n^*\}$. In the proof of \cite[Corollary~4.1]{Gasparis}, a sequence of $w^*$-continuous contractive projections $Q_n:X^*\longrightarrow X^*$ with $Q_n(X^*)=Y_n$ is constructed. The $w^*$-continuity of $Q_n$ provides us with a sequence of finite-rank contractive projections on $X$ satisfying the hypothesis of Proposition~\ref{prop-suficiente}. Let us note that the paper \cite{Gasparis} deals only with real spaces, but the proofs of the particular results we are using here work in the complex case as well.
\end{proof}

We do not know whether the corollary above extends to isometric preduals of arbitrary $L_1(\mu)$ spaces.

\begin{question}
Do all preduals of $L_1(\mu)$ spaces have property \Ak?
\end{question}

Proposition~\ref{prop-suficiente} clearly applied to spaces with a shrinking monotone Schauder basis. Recall that a Schauder basis of a Banach space $X$ is said to be \emph{shrinking} if its sequence of coordinate functionals is a Schauder basis of $X^*$.

\begin{corollary}
Every Banach space with a shrinking monotone Schauder basis has property \Ak.
\end{corollary}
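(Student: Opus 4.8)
The plan is to deduce this immediately from Proposition~\ref{prop-suficiente}: all that is needed is to exhibit a sequence of finite-rank contractive projections $(P_n)$ on $X$ whose adjoints converge to $\Id$ in the strong operator topology of $X^*$. Let $(e_n)$ be the given shrinking monotone Schauder basis of $X$, with coordinate functionals $(e_n^*)$. The natural candidate for $P_n$ is the $n$-th partial sum projection, $P_n\bigl(\sum_k a_k e_k\bigr)=\sum_{k=1}^n a_k e_k$; each $P_n$ is a finite-rank projection, and the fact that the basis is monotone is precisely the statement that $\|P_n\|\leq 1$, so the $P_n$ are contractive.

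The next step is to identify $P_n^*$. For $x=\sum_k a_k e_k\in X$ and $x^*\in X^*$,
$$
\langle P_n^* x^*, x\rangle = \langle x^*, P_n x\rangle = \sum_{k=1}^n a_k\, x^*(e_k) = \Bigl\langle\, \sum_{k=1}^n x^*(e_k)\, e_k^*,\, x \,\Bigr\rangle,
$$
so $P_n^* x^* = \sum_{k=1}^n x^*(e_k)\, e_k^*$ is exactly the $n$-th partial sum of $x^*$ with respect to the sequence $(e_k^*)$. Since the basis is shrinking, $(e_k^*)$ is a Schauder basis of $X^*$, hence $P_n^* x^* \longrightarrow x^*$ in norm for every $x^*\in X^*$; that is, $(P_n^*)$ converges to $\Id$ in the strong operator topology. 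Proposition~\ref{prop-suficiente} then yields that $X$ has property~\Ak.

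There is essentially no obstacle: the proof reduces to two standard facts from basis theory — that a Schauder basis is monotone if and only if its partial sum projections are contractive, and that the partial sum projections associated to the dual basis are the adjoints of the partial sum projections of the original basis — together with the definition of a shrinking basis. Everything of substance is already packaged in Proposition~\ref{prop-suficiente}.
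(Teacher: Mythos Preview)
Your proof is correct and is exactly the argument the paper has in mind: the corollary is stated without proof, preceded only by the remark that Proposition~\ref{prop-suficiente} ``clearly applied to spaces with a shrinking monotone Schauder basis,'' and your write-up simply makes this explicit by taking $P_n$ to be the partial sum projections.
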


It is well-known that an unconditional Schauder basis of a Banach space is shrinking if the space does not contain $\ell_1$ (see \cite[Theorem 3.3.1]{Albiac-Kalton} for instance), so the following particular case appears.

\begin{corollary}
Let $X$ be a Banach space with unconditional monotone Schauder basis which does not contain $\ell_1$. Then $X$ has property \Ak.
\end{corollary}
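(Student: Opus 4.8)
The plan is to deduce this statement directly from the previous corollary (\emph{every Banach space with a shrinking monotone Schauder basis has property} \Ak) by upgrading the hypothesis ``unconditional and containing no copy of $\ell_1$'' to ``shrinking''. So the only real content to supply is the classical fact that an unconditional basis of a space not containing $\ell_1$ is automatically shrinking.

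Concretely, first I would recall the characterization of James (see \cite[Theorem~3.3.1]{Albiac-Kalton}, already cited just above in the excerpt): if $(e_n)$ is an unconditional Schauder basis of a Banach space $X$, then $(e_n)$ is shrinking if and only if $X$ contains no isomorphic copy of $\ell_1$. Applying this to the given unconditional basis of $X$ together with the hypothesis that $X$ does not contain $\ell_1$, we obtain that the basis is shrinking. Since the basis is moreover monotone by assumption, $X$ is a Banach space with a shrinking monotone Schauder basis, and the preceding corollary gives at once that $X$ has property \Ak. That is the whole argument.

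There is essentially no obstacle: monotonicity is inherited verbatim from the hypothesis, and the one nontrivial input, the characterization of shrinking unconditional bases, is a standard result whose statement is exactly in the form needed (it concerns the ambient space $X$ itself). If one preferred a self-contained justification of that step rather than a citation, one could argue that if the unconditional basis failed to be shrinking there would be, by definition, a normalized block basic sequence $(u_k)$ and a functional bounded away from $0$ on it; unconditionality then forces $(u_k)$ to be equivalent to the canonical $\ell_1$-basis, so $X$ would contain $\ell_1$, a contradiction. But invoking \cite[Theorem~3.3.1]{Albiac-Kalton} is the cleaner route, and it is the one I would take.
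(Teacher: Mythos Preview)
Your argument is correct and coincides with the paper's own reasoning: the paper deduces this corollary from the preceding one (shrinking monotone basis $\Rightarrow$ property \Ak) via the James characterization that an unconditional basis in a space without $\ell_1$ is shrinking, citing exactly \cite[Theorem~3.3.1]{Albiac-Kalton}. There is nothing to add.
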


For the class of $M$-embedded spaces, this last result can be improved removing the unconditionality condition on the basis, by using the 1988 result of G.~Godefroy and P.~Saphar that Schauder bases in $M$-embedded spaces with basis constant less than $2$ are shrinking (see \cite[Corollary~III.3.10]{HWW}, for instance). We recall that a Banach space $X$ is said to be \emph{$M$-embedded} if $X^\perp$ is the kernel of an $L_1$-projection in $X^*$ (i.e.\ $X^*= X^\perp \oplus Z$ for some $Z$ and $\|x^\perp + z\|=\|x^\perp\|+\|z\|$ for every $x^\perp\in X^\perp$ and $z\in Z$). We refer the reader to \cite{HWW} for background.

\begin{corollary}
Every $M$-embedded space with monote Schauder basis has property \Ak.
\end{corollary}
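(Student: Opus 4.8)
The plan is to reduce this to the previous corollary by showing that a monotone Schauder basis of an $M$-embedded space is automatically shrinking. First I would recall that the monotonicity of the basis means exactly that the natural partial-sum projections $(P_n)$ satisfy $\|P_n\|\leq 1$ for every $n$, so the basis constant is $1<2$. Then I would invoke the Godefroy--Saphar result (quoted in the excerpt via \cite[Corollary~III.3.10]{HWW}): in an $M$-embedded Banach space, any Schauder basis with basis constant strictly less than $2$ is shrinking. Applying this to our monotone basis yields that it is shrinking.

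Once shrinkingness is in hand, the proof is essentially finished: a shrinking basis is in particular a shrinking monotone basis (monotonicity being part of the hypothesis), so the space falls under the scope of the corollary asserting that every Banach space with a shrinking monotone Schauder basis has property \Ak. Concretely, if one wants to see this directly in terms of Proposition~\ref{prop-suficiente}, the partial-sum projections $(P_n)$ are finite-rank and contractive, and shrinkingness says precisely that for every $x^*\in X^*$ one has $P_n^* x^* \longrightarrow x^*$ in norm; thus $(P_n)$ is the required net and $X$ has property \Ak.

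I do not anticipate a genuine obstacle here, since both ingredients are cited as known: the only point requiring a word of care is the translation between ``monotone basis'' and ``basis constant less than $2$,'' and this is immediate because a monotone basis has basis constant equal to $1$. One should perhaps also remark, as the excerpt does for the predual-of-$\ell_1$ corollary, that the cited results hold in both the real and complex settings, so that the conclusion is not restricted to real scalars. No renorming or approximation argument beyond what is already packaged in Proposition~\ref{prop-suficiente} and the earlier corollary is needed.

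\begin{proof}
Let $X$ be an $M$-embedded Banach space with a monotone Schauder basis. Monotonicity of the basis means that the associated partial-sum projections $(P_n)$ are contractive, so the basis has basis constant $1<2$. By the result of G.~Godefroy and P.~Saphar that Schauder bases of $M$-embedded spaces with basis constant less than $2$ are shrinking (see \cite[Corollary~III.3.10]{HWW}), the basis is shrinking; that is, for every $x^*\in X^*$ we have $P_n^* x^* \longrightarrow x^*$ in norm. Since the $(P_n)$ are finite-rank contractive projections, Proposition~\ref{prop-suficiente} applies and $X$ has property \Ak. (The cited results are valid for real and complex scalars alike.)
\end{proof}
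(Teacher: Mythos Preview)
Your proposal is correct and matches the paper's approach exactly: the paper states the corollary immediately after explaining that, by Godefroy--Saphar, a Schauder basis of an $M$-embedded space with basis constant less than $2$ is shrinking, which reduces the result to the preceding corollary on shrinking monotone bases (and hence to Proposition~\ref{prop-suficiente}). Your translation from ``monotone'' to ``basis constant $1<2$'' and your direct appeal to Proposition~\ref{prop-suficiente} via $P_n^* x^*\to x^*$ are precisely what the paper has in mind.
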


As $c_0$ is an $M$-embedded space \cite[Examples~III.1.4]{HWW} and $M$-embeddedness passes to closed subspaces \cite[Theorem~III.1.6]{HWW}, we get the following interesting particular case

\begin{corollary}[\textrm{\cite[Corollary~12]{Martin-KNA}}]
\label{coro-subc0monotone}
Every closed subspace of $c_0$ with monotone Schauder basis has property \Ak.
\end{corollary}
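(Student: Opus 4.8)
The plan is to deduce this as an immediate combination of the preceding corollary with two standard facts about $M$-embedded spaces, so the proof is essentially a matter of bookkeeping. First I would let $X$ be a closed subspace of $c_0$ equipped with a monotone Schauder basis, and observe that the monotonicity hypothesis is precisely what is needed to feed into the previous corollary — all that remains is to verify that $X$ itself belongs to the class of $M$-embedded spaces.

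The first step, then, is to check that $X$ is $M$-embedded. This follows at once from the two cited facts: $c_0$ is $M$-embedded (\cite[Examples~III.1.4]{HWW}) and $M$-embeddedness passes to closed subspaces (\cite[Theorem~III.1.6]{HWW}). Hence $X$, being a closed subspace of the $M$-embedded space $c_0$, is $M$-embedded.

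The second and final step is simply to apply the preceding corollary — every $M$-embedded space with monotone Schauder basis has property \Ak\ — to the space $X$, which we have just seen satisfies both hypotheses. This yields the claim. Recall that the preceding corollary itself rests on the Godefroy--Saphar theorem that a Schauder basis with basis constant less than $2$ in an $M$-embedded space is automatically shrinking, combined with Proposition~\ref{prop-suficiente}; since a monotone basis has basis constant $1<2$, it is shrinking and Proposition~\ref{prop-suficiente} applies through the partial-sum projections.

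There is essentially no obstacle here: the only point requiring care is that the monotone basis of $X$ is genuinely a Schauder basis of $X$ with basis constant $1$, so the hypothesis of the previous corollary is literally met, and everything else is citation. If one preferred a self-contained argument one would bypass the intermediate corollary and instead verify directly that the coordinate functionals of the monotone basis form a shrinking basic sequence in $X^*$ (by Godefroy--Saphar, using $M$-embeddedness of $X$), whence the partial-sum projections are finite-rank contractive projections on $X$ whose adjoints converge to $\Id$ in the strong operator topology of $X^*$, and then invoke Proposition~\ref{prop-suficiente}.
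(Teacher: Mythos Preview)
Your proposal is correct and follows exactly the paper's approach: the paper deduces the corollary from the preceding one by noting that $c_0$ is $M$-embedded \cite[Examples~III.1.4]{HWW} and that $M$-embeddedness passes to closed subspaces \cite[Theorem~III.1.6]{HWW}, precisely as you do. Your additional unpacking of the Godefroy--Saphar step and the alternative direct route via Proposition~\ref{prop-suficiente} are accurate but go beyond what the paper records.
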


Compare this result with the example of a closed subspace of $c_0$ with Schauder basis failing property \Ak\ (Example~\ref{exa:JS}). It is an interesting question whether Corollary~\ref{coro-subc0monotone} extends to every closed subspace of $c_0$ with the metric approximation property.

\begin{question}\label{question:c0-metric}
Does every closed subspace of $c_0$ with the metric approximation property have property \Ak?
\end{question}

We finish this section which an easy observation about Questions \ref{question:A=>Ak} and \ref{question-dual-AP-Ak}. Even though we do not know whether either property A or the approximation property of the dual is sufficient to get property \Ak, we may provide the following partial answer.

\begin{prop}
Let $X$ be a Banach space having Lindenstrauss property A and such that $X^*$ has the approximation property. Then, $X$ has property \Ak.
\end{prop}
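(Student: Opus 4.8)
The plan is to chain the two hypotheses: use the approximation property of $X^*$ to reduce an arbitrary compact operator to a finite-rank one, and then apply Lindenstrauss property A to the (finite-dimensional) range of that finite-rank operator. Thus, to show that $X$ has property \Ak, I would fix a Banach space $Z$, an operator $T\in K(X,Z)$, and $\varepsilon>0$, and produce a compact norm attaining operator within $\varepsilon$ of $T$.

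First I would apply Proposition~\ref{prop-AP}.b: since $X^*$ has the approximation property, $F(X,Z)$ is dense in $K(X,Z)$, so there is $F\in F(X,Z)$ with $\|T-F\|<\varepsilon/2$. The crucial remark is that $F$ maps into the finite-dimensional subspace $W:=\overline{F(X)}$ of $Z$ and that, since the norm of $W$ is the one inherited from $Z$, the operator norm of $F$ is unchanged when $F$ is regarded as an element of $L(X,W)$.

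Next I would invoke property A of $X$ with range space $W$: as $NA(X,W)$ is dense in $L(X,W)$, I can pick $S\in NA(X,W)$ with $\|F-S\|<\varepsilon/2$, together with $x_0\in S_X$ satisfying $\|Sx_0\|=\|S\|$. Viewed as an operator from $X$ into $Z$, this $S$ has range inside the finite-dimensional space $W$, hence it is of finite rank and in particular compact; moreover $\|Sx_0\|_Z=\|Sx_0\|_W=\|S\|$, so $S$ also attains its norm as a map into $Z$. Therefore $S\in K(X,Z)\cap NA(X,Z)$ and $\|T-S\|\leq\|T-F\|+\|F-S\|<\varepsilon$, which is exactly what is required.

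I do not expect a genuine obstacle here: the argument is essentially bookkeeping. The only points needing a little care are that neither the operator norm nor norm attainment is affected by shrinking the range from $Z$ to $W$ and then re-enlarging it, and that finite-rank operators are automatically compact. It is worth noting that property A enters only through finite-dimensional range spaces, in parallel with the still-open problem (Question~\ref{question-fin-dim-B}) of whether finite-dimensional spaces have Lindenstrauss property B.
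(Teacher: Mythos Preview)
Your proposal is correct and follows essentially the same route as the paper's own proof: reduce to finite-rank operators via Proposition~\ref{prop-AP}.b, apply Lindenstrauss property A with the finite-dimensional range $W=F(X)$, and then view the resulting norm attaining operator back in the original range space. The paper presents the argument slightly more tersely (without the explicit $\varepsilon/2$ splitting), but the logical content is identical.
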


\begin{proof}
As $X^*$ has the approximation property, it is enough to show that every finite-rank operator from $X$ can be approximated by finite-rank norm attaining operators (see Proposition~\ref{prop-AP}.b). Fix a finite-rank operator $T:X\longrightarrow Y$ and write $Z=T(X)$ which is finite-dimensional. As $X$ has property A, we have that $NA(X,Z)=NA(X,Z)\cap K(X,Z)$ is dense in $L(X,Z)=K(X,Z)$, so we may find a sequence $T_n\in NA(X,Z)$ converging to $T$ (viewed as an operator from $X$ into $Z$). It is now enough to consider the operators $T_n$ as elements of $NA(X,Y)\cap K(X,Y)$.
\end{proof}

\section{Positive results on range spaces}\label{sec:range}

The first example of a Banach space with property \Bk\, is the base field by the classical Bishop-Phelps theorem \cite{BishopPhelps}. This implies that every rank-one operator can be approximated by norm attaining rank-one operators. It is not known whether this extends in general to finite-rank operators (Question~\ref{question-fin-dim-B}) or, even, to rank-two operators.

To get more positive examples, and analogously to what is done in the previous section, we start by recalling that it is not known whether Lindenstrauss property B implies property \Bk\, (Question~\ref{question:B=>Bk}) but, nevertheless, the usual way to prove property B for a Banach space $X$ is by showing that every operator into $Y$ can be approximated by compact perturbations of it attaining the norm. This is what happens with property $\beta$, which is the main example of this kind. A Banach space $Y$ has \emph{property $\beta$} (J.~Lindenstrauss 1963 \cite{Lindens}) if there are two sets $\{ y_i\,:\, i \in I\} \subset S_Y$, $\{ y^{*}_i\,:\, i \in I \} \subset S_{Y^*}$ and a constant $0 \leq \rho <1$ such that the following conditions hold:
\begin{enumerate}
\item[$(i)$] $y^{*}_i(y_i)=1$, $\forall i \in I $.
\item[$(ii)$] $|y^*_i(y_j)|\leq \rho <1$ if $i, j \in I, i \ne j$.
\item[$(iii)$] For every $y\in Y$,  $\|y\|=\sup\nolimits_{i \in I}  \bigl|y^{*}_i(y)\bigr|$.
\end{enumerate}
We refer to \cite{Moreno} and references therein for more information and background. Examples of Banach spaces with property $\beta$ are closed subspace of $\ell_\infty(I)$ containing the canonical copy of $c_0(I)$ and real finite-dimensional Banach spaces whose unit ball is a polyhedron (actually, these are the only real finite-dimensional spaces with property $\beta$).

\begin{prop}[Lindenstrauss]\label{prop-beta}
Property $\beta$ implies property \Bk.
\end{prop}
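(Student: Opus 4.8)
The plan is to mimic, on the range side, the proof that property $\alpha$ implies property \Ak\ (Proposition~\ref{prop-alpha}), exploiting the symmetric roles of $\alpha$ and $\beta$. So suppose $Y$ has property $\beta$ with sets $\{y_i : i\in I\}\subset S_Y$, $\{y_i^* : i\in I\}\subset S_{Y^*}$ and constant $\rho\in[0,1)$. Fix a Banach space $Z$, a nonzero $T\in K(Z,Y)$ and $\varepsilon>0$. Normalizing, we may assume $\|T\|=1$. First I would pick $i_0\in I$ and $z_0\in S_Z$ with $|y_{i_0}^*(Tz_0)|$ as close to $1$ as we like; composing $T$ with a suitable modulus-one scalar we may even arrange $y_{i_0}^*(Tz_0)>1-\delta$ for a small $\delta$ to be chosen in terms of $\varepsilon$ and $\rho$. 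Then define the perturbation
\[
Sz = Tz + \varepsilon\, y_{i_0}^*(Tz)\, y_{i_0} \qquad (z\in Z),
\]
which is again compact (indeed a rank-one perturbation of $T$) and satisfies $\|S-T\|\leq \varepsilon$ since $\|y_{i_0}^*(Tz)y_{i_0}\|\leq \|z\|$.

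The key step is then to show $S$ attains its norm at $z_0$, i.e.\ that $\|Sz_0\|\geq\|Sz\|$ for all $z\in B_Z$. On the one hand, using $\|y\|=\sup_{i}|y_i^*(y)|$ and condition $(ii)$, for any $z\in B_Z$ and any $i\neq i_0$ we have
\[
|y_i^*(Sz)| \leq |y_i^*(Tz)| + \varepsilon|y_{i_0}^*(Tz)|\,|y_i^*(y_{i_0})| \leq 1 + \varepsilon\rho,
\]
while for $i=i_0$, $|y_{i_0}^*(Sz)| = (1+\varepsilon)|y_{i_0}^*(Tz)| \leq 1+\varepsilon$; so $\|Sz\|\leq 1+\varepsilon$ for every $z\in B_Z$, and moreover the ``$i\neq i_0$'' coordinates can never exceed $1+\varepsilon\rho$. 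On the other hand, $|y_{i_0}^*(Sz_0)| = (1+\varepsilon)y_{i_0}^*(Tz_0) > (1+\varepsilon)(1-\delta)$, so $\|Sz_0\| > (1+\varepsilon)(1-\delta)$. Choosing $\delta$ small enough that $(1+\varepsilon)(1-\delta) > 1+\varepsilon\rho$ (possible since $\rho<1$), we conclude that for every $z\in B_Z$ the norm $\|Sz\|$ is governed by the $i_0$-coordinate once it gets close to the supremum, and a short argument gives $\|S\| = \sup_{z\in B_Z}|y_{i_0}^*(Sz)| = (1+\varepsilon)\,\sup_{z\in B_Z} y_{i_0}^*(Tz)$, with the sup attained at $z_0$ precisely because $y_{i_0}^*(Tz_0)$ was chosen maximal (or within the tolerance that still beats $1+\varepsilon\rho$); more cleanly, just choose $z_0$ to exactly attain $\sup_{z\in B_Z}y_{i_0}^*(Tz)$ if that sup is attained, or approximate and re-normalize. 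Hence $S\in NA(Z,Y)\cap K(Z,Y)$ with $\|S-T\|\leq\varepsilon$, proving property \Bk.

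The one genuine subtlety — the main obstacle — is attainment: unlike the domain case, where $x_{i_0}$ itself is a distinguished norming point, here we need $\sup_{z\in B_Z}y_{i_0}^*(Tz)$ to be a maximum. But $z\mapsto y_{i_0}^*(Tz)$ is exactly the functional $T^*y_{i_0}^*\in Z^*$, and $T$ compact forces $T^*y_{i_0}^*$ to attain its norm on $B_Z$: indeed $T^*$ is compact, so $T^*(B_{Y^*})$ is relatively norm-compact, hence one checks that $T^*y_{i_0}^*$ is in the (norm) closure of a weak${}^*$-compact set whose elements attain their norm — more directly, $\sup_{z\in B_Z}|g(z)|$ is attained whenever $g$ lies in the range of a compact operator, because a maximizing sequence $(z_n)$ has $Tz_n$ convergent along a subnet and one transfers the limit back. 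Alternatively, and this is probably the slickest route, just don't insist on exact attainment of the auxiliary sup: choose $z_0\in S_Z$ with $y_{i_0}^*(Tz_0) > \|T^*y_{i_0}^*\| - \delta$ and with $\|T^*y_{i_0}^*\|$ itself chosen (by Bishop--Phelps applied in $Z^*$, or by density of norm-attaining functionals) close to $1$; the inequalities above then still yield $\|Sz_0\| > 1+\varepsilon\rho \geq \sup_{i\neq i_0}\sup_{z\in B_Z}|y_i^*(Sz)|$, and combined with the fact that the $i_0$-slice of $S$ is $(1+\varepsilon)T^*y_{i_0}^*$ which does attain its norm (being a compact-operator image), one gets $S\in NA(Z,Y)$. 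I would write up the clean compact-operator attainment fact as a one-line observation and then run the property-$\beta$ perturbation argument exactly parallel to Proposition~\ref{prop-alpha}.
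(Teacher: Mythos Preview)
Your perturbation $Sz = Tz + \varepsilon\, y_{i_0}^*(Tz)\,y_{i_0}$ gives $S^*y_{i_0}^* = (1+\varepsilon)\,T^*y_{i_0}^*$, and your estimates do show that $\|S\|=\|S^*y_{i_0}^*\|$ once $\delta$ is chosen correctly. The problem is the last step: you need $T^*y_{i_0}^*$ to attain its norm on $B_Z$, and your repeated claim that this follows from compactness of $T$ is false. A maximizing sequence $(z_n)$ in $B_Z$ does have $(Tz_n)$ convergent to some $w\in \overline{T(B_Z)}$, but there is no reason for $w$ to lie in $T(B_Z)$ unless $Z$ is reflexive. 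Concretely, take $Z=c_0$, $Y$ the scalar field (trivially $\beta$), and $T=g=(2^{-n})\in\ell_1=Z^*$: this $T$ is rank one, hence compact, yet $T^*y^*=g$ does not attain its norm. Your operator $S$ is then $(1+\varepsilon)g$, which still fails to attain its norm, so the argument collapses already in the one-dimensional range case.

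You brush past the fix when you mention Bishop--Phelps, but you use it in the wrong place: you invoke it only to make $\|T^*y_{i_0}^*\|$ close to $1$, not to modify the construction of $S$. The paper's proof does exactly what is missing in yours: after picking $i$ with $\|T^*y_i^*\|>1-\delta$, it applies Bishop--Phelps in $Z^*$ to get a norm-attaining $x_0^*$ with $\|x_0^*\|=\|T^*y_i^*\|$ and $\|T^*y_i^*-x_0^*\|<\delta$, and then builds
\[
Sz = Tz + \Bigl[(1+\tfrac{\varepsilon}{2})\,x_0^*(z) - y_i^*(Tz)\Bigr]y_i,
\]
so that $S^*y_i^*=(1+\tfrac{\varepsilon}{2})x_0^*$ is norm-attaining by design. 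Your $\beta$-type inequalities then go through and give $S\in NA(Z,Y)$. In short, the ``one-line observation'' you planned to record is simply not true; the genuine extra ingredient over the property-$\alpha$ proof is the Bishop--Phelps step, and it has to enter the definition of $S$, not just the choice of $z_0$.
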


\begin{proof}
Let $Y$ be a Banach space with property $\beta$ with constant $\rho\in [0,1)$ and let $X$ be a Banach space. Fix $T\in K(X,Y)$ with $\|T\|=1$ and $\varepsilon>0$, and consider $0<\delta<\varepsilon/2$ such that $$
\left(1 + \frac{\varepsilon}{2}\right)(1-\delta)>1 + \rho\left(\frac{\varepsilon}{2}+\delta\right).
$$
Next, we find $i\in I$ such that
$$
\|T^* y^*_{i}\|>1-\delta
$$
and apply Bishop-Phelps theorem \cite{BishopPhelps} to get  $x^*_0\in X^*$ attaining its norm such that
$$
\|x_0^*\|=\|T^* y_i^*\| \quad \text{and} \quad \|T^* y^*_{i} - x_0^*\|<\delta.
$$
Define $S\in K(X,Y)$ by
$$
S x = T x + \left[\left(1+\frac{\varepsilon}{2}\right) x_0^*(x) - y^*_i(Tx)\right] y_i \qquad (x\in X).
$$
Then, it is immediate to check that $\|T-S\|<\frac{\varepsilon}{2}+\delta <\varepsilon$. Now, we have that
$$
S^*y_i^*=\left(1+\frac{\varepsilon}{2}\right) x_0^*, \quad \|S^* y_i^*\|\geq \left(1+\frac{\varepsilon}{2}\right) (1-\delta)
$$
and, for $j\neq i$, we have
$$
\|S^* y^*_j\|\leq 1 + \rho \left(\frac{\varepsilon}{2}+\delta\right).
$$
This shows that $S^*$ attains its norm at $y_i^*\in S_{Y^*}$. As $S^* y_i^* = \left(1+\tfrac{\varepsilon}{2}\right)x_0^*\in X^*$ also attains its norm, it follows that $S\in NA(X,Y)$.
\end{proof}

R.~Partington proved in 1982 \cite{Partington} that every Banach space can be renormed with property $\beta$, so we obtain that property \Bk\, does not have isomorphic consequences.

\begin{corollary}
Every Banach space can be equivalently renormed to have property \Bk.
\end{corollary}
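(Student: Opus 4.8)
The plan is to combine the result of R.~Partington \cite{Partington}, that every Banach space admits an equivalent norm with property $\beta$, with the preceding Proposition~\ref{prop-beta}, which asserts that property $\beta$ implies property \Bk. Given an arbitrary Banach space $Y$, first I would invoke Partington's theorem to produce an equivalent norm $|\!|\!|\cdot|\!|\!|$ on $Y$ such that $(Y,|\!|\!|\cdot|\!|\!|)$ has property $\beta$; then Proposition~\ref{prop-beta} immediately yields that $(Y,|\!|\!|\cdot|\!|\!|)$ has property \Bk. Since property \Bk\ is by definition a property of a given norm (it concerns density of norm attaining compact operators, a metric notion), this is exactly the assertion that $Y$ can be equivalently renormed to have property \Bk.

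The only genuinely substantive ingredient is Partington's renorming theorem, which we are entitled to cite as a known result and which I would not reprove. For completeness one might recall the shape of Partington's construction: given $Y$, take a $w^*$-dense subset $\{y_i^*\,:\,i\in I\}$ of $B_{Y^*}$ of cardinality the density character of $Y$, embed $Y$ into $\ell_\infty(I)$ via $y\longmapsto (y_i^*(y))_{i\in I}$, and average an $\ell_\infty$-type norm with a bump coming from a linearly independent selection so as to separate the functionals and make each $y_i^*$ (after a small perturbation) norm-attaining with the uniform angular separation required in condition~(ii) of property $\beta$; the resulting norm is equivalent to the original one because the $y_i^*$ are dense in the ball. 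But none of this needs to be carried out here.

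There is no real obstacle: the corollary is a formal consequence of a cited theorem and an already-proved proposition, and the proof is a two-line deduction. The only point worth stating explicitly is the observation that \Bk\ is an isometric (not isomorphic) property, so that ``$Y$ has property \Bk\ in some equivalent norm'' is the correct reading of the statement; this is precisely what makes the corollary nontrivial and parallel to the earlier corollary on property \Ak. Accordingly, I would write:

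\begin{proof}
By a theorem of R.~Partington \cite{Partington}, every Banach space admits an equivalent norm with property $\beta$. Given a Banach space $Y$, endow it with such a norm; by Proposition~\ref{prop-beta}, the resulting space has property \Bk.
\end{proof}
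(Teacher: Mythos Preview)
Your proof is correct and matches the paper's own argument exactly: the corollary is deduced immediately from Partington's renorming theorem \cite{Partington} together with Proposition~\ref{prop-beta}. The paper does not even write a formal proof, merely stating that Partington's result combined with the proposition yields that property \Bk\ has no isomorphic consequences.
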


Let us comment that property \Bk\, for a Banach space $Y$ does not imply that for every Banach space $X$, norm attaining finite-rank operators from $X$ into $Y$ are dense in $K(X,Y)$. Indeed, by the above, there are many Banach spaces with property \Bk\, failing the approximation property.

Let us comment that the knowledge about property B is more unsatisfactory than the one about property A. Besides of property $\beta$, the only sufficient condition we know for property B is the so-called property quasi-$\beta$, introduced in 1996 by M.~Acosta, F.~Agirre and R.~Pay\'{a} \cite{AcoAguipaya}. We are not going to give the definition of this property here (it is a weakening of property $\beta$), but let us just comment that there are even examples of finite-dimensional spaces with property quasi-$\beta$ which do not have property $\beta$. Again, the proof of the fact that property quasi-$\beta$ implies property B can be adapted to the compact case.

\begin{prop}[Acosta-Aguirre-Pay\'{a}]
Property quasi-$\beta$ implies property \Bk.
\end{prop}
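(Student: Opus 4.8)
The plan is to mimic closely the proof of Proposition~\ref{prop-beta} (property $\beta$ implies property \Bk), adapting the argument for property quasi-$\beta$ exactly as the known proof that quasi-$\beta$ implies Lindenstrauss property B is adapted. Recall that property quasi-$\beta$ weakens property $\beta$ by allowing the norming set $\{y_i^*\,:\,i\in I\}\subset S_{Y^*}$ to be only $w^*$-dense in a norming subset of $B_{Y^*}$ (more precisely, there are $\{y_i\}\subset S_Y$, $\{y_i^*\}\subset S_{Y^*}$ and a function $\rho:I\to[0,1)$ with $y_i^*(y_i)=1$, $|y_i^*(y_j)|\leq\rho(i)<1$ for $i\neq j$, and such that for each $i$, $\limsup$ of $|y_i^*(z)|$ along nets in $\{y_j^*\}$ that $w^*$-converge off $y_i^*$ stays below $1$, while $\{y_i^*\}$ still norms $Y$). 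The point of the compact case is simply that the perturbation $S$ built in the proof is a compact operator whenever $T$ is, since it is $T$ plus a finite-rank (indeed rank-one) operator.

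The key steps, in order, are as follows. First I would fix $T\in K(X,Y)$ with $\|T\|=1$ and $\varepsilon>0$. Using that $\{y_i^*\}$ norms $Y$, hence norms the range of $T$, and using the compactness of $T$ to pass to an actual near-maximizer, I would choose $i\in I$ with $\|T^*y_i^*\|$ close to $1$ (within a small $\delta$ to be specified, chosen so that the analogue of the inequality $(1+\varepsilon/2)(1-\delta)>1+\rho(i)(\varepsilon/2+\delta)$ holds with $\rho(i)$ in place of $\rho$). Then I apply the Bishop–Phelps theorem to get $x_0^*\in X^*$ attaining its norm with $\|x_0^*\|=\|T^*y_i^*\|$ and $\|T^*y_i^*-x_0^*\|<\delta$, and define
$$
Sx = Tx + \left[\left(1+\frac{\varepsilon}{2}\right)x_0^*(x) - y_i^*(Tx)\right]y_i \qquad (x\in X).
$$
This $S$ is a rank-one perturbation of $T$, hence $S\in K(X,Y)$, and $\|T-S\|<\varepsilon$ as in Proposition~\ref{prop-beta}. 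The crucial verification is that $S^*$ attains its norm at $y_i^*$: one checks $S^*y_i^* = (1+\varepsilon/2)x_0^*$, so $\|S^*y_i^*\|\geq(1+\varepsilon/2)(1-\delta)$, and then one must bound $\|S^*y^*\|$ for all other norming functionals $y^*$ of $Y$ — not just the $y_j^*$ with $j\neq i$ — by something strictly smaller. For $y=y_j^*$ with $j\neq i$ the estimate $\|S^*y_j^*\|\leq 1+\rho(i)(\varepsilon/2+\delta)$ goes through verbatim; for the $w^*$-limit points of the $y_j^*$ one uses the quasi-$\beta$ condition (the $\limsup$ bound below $1$) together with the $w^*$-continuity of the relevant expression to get the same inequality in the limit. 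Finally, since $S^*y_i^*=(1+\varepsilon/2)x_0^*$ itself attains its norm on $X$, one concludes $S\in NA(X,Y)$.

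The main obstacle is precisely this last estimate: in the pure property $\beta$ case the norming set is all of $\{y_i^*\}$ and the bound $\rho$ is uniform, so one only compares against $y_j^*$, $j\neq i$; for quasi-$\beta$ one must control $\|S^*y^*\|$ over the $w^*$-closure of the norming set, which is where the defining $\limsup$-condition of quasi-$\beta$ enters, and one has to check that the perturbation term $\left[(1+\varepsilon/2)x_0^*(\cdot)-y_i^*(T\cdot)\right]$ behaves well under the relevant $w^*$-limits (it does, because $x_0^*$ and $T^*y_i^*$ are fixed elements of $X^*$ and $y_i^* \mapsto y_i^*(Tx)$ is $w^*$-continuous for each $x$, while $y_i$ is a fixed vector on which limit functionals have small modulus). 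Since all of this is exactly the adaptation already carried out in \cite{AcoAguipaya} for operators in general, replacing ``operator'' by ``compact operator'' throughout and noting that the perturbation has finite rank, I would simply state that the proof of \cite{AcoAguipaya} applies verbatim, the only new observation being that $S-T$ has rank one, so $S$ is compact whenever $T$ is.

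\begin{proof}[Sketch of the proof]
The proof that property quasi-$\beta$ implies Lindenstrauss property B given in \cite{AcoAguipaya} actually shows that every $T\in L(X,Y)$ can be approximated by operators of the form $T+R$ with $R$ of rank one which attain their norm. In particular, if $T\in K(X,Y)$, then $T+R$ is again compact, so the same argument gives the density of $K(X,Y)\cap NA(X,Y)$ in $K(X,Y)$. The needed near-maximizer $i\in I$ with $\|T^*y_i^*\|$ close to $\|T\|$ is available because $\{y_i^*\,:\,i\in I\}$ norms $Y$.
\end{proof}
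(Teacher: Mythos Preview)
Your approach matches the paper's exactly: the paper gives no formal proof of this proposition, only the remark preceding it that ``the proof of the fact that property quasi-$\beta$ implies property B can be adapted to the compact case,'' which is precisely your observation that the perturbation in \cite{AcoAguipaya} has rank one, so $S$ remains compact whenever $T$ is. Your final ``Sketch of the proof'' paragraph is essentially identical in content to the paper's one-line justification.

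One minor caution: your informal recollection of the definition of quasi-$\beta$ in the preamble is not quite the one in \cite{AcoAguipaya} (the actual condition is that every \emph{extreme point} of $B_{Y^*}$ lies in the $w^*$-closure of a subset $A_{e^*}\subset A$ on which $\sup \rho <1$, rather than a $\limsup$ condition on nets), and the order in which you propose to choose $\delta$ and $i$ is circular since $\rho(i)$ may be arbitrarily close to $1$. In the genuine proof one first picks an extreme point $e^*$ with $\|T^*e^*\|$ near $\|T\|$, then the subset $A_{e^*}$ with its uniform bound $r=\sup_{A_{e^*}}\rho<1$, and only then $\delta$ and $i\in A_{e^*}$. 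None of this affects your final sketch, which correctly defers the details to \cite{AcoAguipaya}.
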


Let us pass to discuss on results which are specific of property \Bk\, and which are not related to property B. Most of the results that we know of this kind follow from two general principles. The first one is the following straightforward proposition.

\begin{prop}\label{prop:general-principle-Bk-polyhedral}
Let $Y$ be a Banach space with the approximation property. Suppose that for every finite-dimensional subspace $W$ of $Y$, there is a closed subspace $Z$ having Lindenstrauss property B such that $W\leq Z \leq Y$. Then $Y$ has property \Bk.
\end{prop}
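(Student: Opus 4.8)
The plan is to use the approximation property of $Y$ to reduce the problem to finite-rank operators, to lodge the finite-dimensional range of such an operator inside one of the given subspaces with property B, to apply property B there, and finally to carry the norm-attaining approximation obtained back into $Y$ through the isometric inclusion.

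Concretely, fix a Banach space $X$, an operator $T\in K(X,Y)$ and $\varepsilon>0$. Since $Y$ has the approximation property, Proposition~\ref{prop-AP}.a gives $\overline{F(X,Y)}=K(X,Y)$, so it is enough to approximate finite-rank operators: pick $S\in F(X,Y)$ with $\|T-S\|<\varepsilon/2$. Put $W:=S(X)$, a finite-dimensional subspace of $Y$, and let $Z$ be a closed subspace with $W\leq Z\leq Y$ having property B, as furnished by the hypothesis. The range of $S$ lies in $W\subseteq Z$, so $S$ may be viewed as an operator $\widetilde S\in K(X,Z)$ with $\|\widetilde S\|_{L(X,Z)}=\|S\|$, and property B of $Z$ yields $R\in NA(X,Z)$ with $\|R-\widetilde S\|_{L(X,Z)}<\varepsilon/2$.

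Then I would transport back along the inclusion $\iota\colon Z\hookrightarrow Y$, which is an isometric embedding. If $x_0\in S_X$ satisfies $\|Rx_0\|_Z=\|R\|$, then $\|\iota Rx_0\|_Y=\|Rx_0\|_Z=\|R\|=\|\iota R\|_{L(X,Y)}$, so $\iota R\in NA(X,Y)$; since $\iota R-S$ maps $X$ into $Z$, one also has $\|\iota R-T\|\leq\|\iota R-S\|+\|S-T\|=\|R-\widetilde S\|_{L(X,Z)}+\|S-T\|<\varepsilon$. Thus we arrive at a norm-attaining operator $\varepsilon$-close to $T$.

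The step I expect to be the real obstacle is the compactness of this approximant: property B, used as a black box, only guarantees a norm-attaining operator in $L(X,Z)$, so a priori neither $R$ nor $\iota R$ need be compact, even though $\widetilde S$ is. To finish I would invoke the fact — recalled in the paper right after Questions~\ref{question:A=>Ak} and \ref{question:B=>Bk} — that in practice the density of norm attaining operators into $Z$ is always obtained by adding a compact (indeed finite-rank) perturbation; this is exactly what happens, for instance, in the proof of Proposition~\ref{prop-beta} for property $\beta$, and it is automatic when $Z$ is finite-dimensional, in which case $L(X,Z)=K(X,Z)$. (In fact the informal statement of this principle in the introduction asks that the containing subspace $Z$ have property \Bk, which makes this point immediate.) Under any such proviso $R$ can be chosen in $K(X,Z)$ — $\widetilde S$ being compact — and then $\iota R\in K(X,Y)\cap NA(X,Y)$ lies within $\varepsilon$ of $T$, which is what has to be shown. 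So the genuine content lies in keeping the norm-attaining approximation inside the compact operators; the rest is routine approximation-theoretic bookkeeping.
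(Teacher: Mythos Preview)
The paper does not actually prove this proposition; it simply labels it ``straightforward'' and moves on. Your outline is precisely the intended argument: use the approximation property to reduce to a finite-rank operator, house its finite-dimensional range inside one of the subspaces $Z$, approximate there by a norm-attaining operator, and push back into $Y$ along the isometric inclusion.

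Your worry about compactness is legitimate and well spotted, not a technicality. With Lindenstrauss property B alone, the norm-attaining approximant $R\in NA(X,Z)$ need not be compact, so $\iota R$ need not lie in $K(X,Y)$, and property \Bk\ does not follow from the argument as written. You have also correctly located the fix: the informal description of this same principle in the introduction asks that $Z$ have property \Bk\ (not merely property B), and under that hypothesis the argument is genuinely straightforward. Moreover, in every application the paper actually makes (polyhedral spaces and their complex analogue), the containing subspace $Z$ can be taken finite-dimensional, so $L(X,Z)=K(X,Z)$ and the issue disappears. So your proof is correct once one reads the hypothesis as ``property \Bk'' (or restricts to finite-dimensional $Z$), and your diagnosis of exactly where the literal statement is loose is accurate.
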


This result applied to those Banach spaces with the approximation property satisfying that all of its finite-dimensional subspaces have Lindenstrauss property B. This is the case of the polyhedral spaces, as finite-dimensional polyhedral spaces clearly fulfil property $\beta$ \cite{Lindens}.

\begin{corollary}
A polyhedral Banach space with the approximation property has property \Bk.
\end{corollary}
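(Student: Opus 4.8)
The plan is to obtain this corollary as an immediate application of Proposition~\ref{prop:general-principle-Bk-polyhedral}: since $Y$ is already assumed to have the approximation property, the only thing left to check is the hypothesis of that proposition, namely that every finite-dimensional subspace $W$ of $Y$ is contained in a closed subspace $Z$ with $W\leq Z\leq Y$ that enjoys Lindenstrauss property~B.

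First I would simply take $Z=W$. By the very definition of a polyhedral Banach space, the unit ball of every finite-dimensional subspace of $Y$ is a polyhedron; in particular $B_W$ is a polyhedron, so $W$ is itself a finite-dimensional polyhedral space. Next I would invoke the classical fact, due to Lindenstrauss \cite{Lindens} and recalled in the introduction, that every finite-dimensional Banach space whose unit ball is a polyhedron has property~B; equivalently, such a space has property~$\beta$ and hence property~\Bk\ by Proposition~\ref{prop-beta}, and for a finite-dimensional range space property~\Bk\ coincides with property~B because every operator into a finite-dimensional space is automatically compact. Either way, $Z=W$ has Lindenstrauss property~B, and the inclusions $W\leq Z\leq Y$ hold trivially since $W$ is finite-dimensional, hence closed.

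Having verified the hypothesis for an arbitrary finite-dimensional $W\leq Y$, Proposition~\ref{prop:general-principle-Bk-polyhedral} applies and yields that $Y$ has property~\Bk. I do not anticipate a genuine obstacle here: the argument is essentially a bookkeeping check, the only point to keep in mind being the definitional fact that polyhedrality passes to finite-dimensional subspaces, which is precisely what legitimizes the choice $Z=W$; the approximation property of $Y$ enters only through the cited general principle.
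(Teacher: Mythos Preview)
Your proposal is correct and follows exactly the route the paper takes: the paper explicitly remarks that Proposition~\ref{prop:general-principle-Bk-polyhedral} applies with $Z=W$ because finite-dimensional polyhedral spaces have property~$\beta$ (hence property~B), and then states the corollary. Your only extra detour --- checking that property~B and property~\Bk\ agree for finite-dimensional range spaces --- is harmless but unnecessary, since Proposition~\ref{prop:general-principle-Bk-polyhedral} asks only for property~B of $Z$.
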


To deal with the complex case, we observe that polyhedrality is equivalent to the fact that the norm of each finite-dimensional subspace can be calculated as the maximum of the absolute value of finitely many functionals, and this implies property $\beta$ also in the complex case. With this idea, the result above can be extended to the complex case.

\begin{proposition}
Let $Y$ be a complex Banach space with the approximation property such that for every finite-dimensional subspace, the norm of the subspace can be calculated as the maximum of the modulus of finitely many functionals. Then $Y$ has property \Bk.
\end{proposition}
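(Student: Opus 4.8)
The plan is to deduce the proposition from the already established Proposition~\ref{prop:general-principle-Bk-polyhedral}. Since $Y$ has the approximation property, it suffices to check that every finite-dimensional subspace $W$ of $Y$ is contained in a closed subspace of $Y$ enjoying Lindenstrauss property B, and the obvious candidate is $Z=W$ itself. So the whole argument reduces to showing that each such $W$ has Lindenstrauss property B. Now every bounded operator into a finite-dimensional space is compact, so for $W$ property \Bk\ and Lindenstrauss property B are literally the same statement; hence, by Proposition~\ref{prop-beta}, it is enough to prove that $W$ has the (complex) property $\beta$. Note that Proposition~\ref{prop-beta} does apply in the complex setting, since its proof only invokes the Bishop--Phelps theorem, which is available for complex Banach spaces.

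To manufacture a property-$\beta$ system for $W$, start from the hypothesis: there are $\phi_1,\dots,\phi_m\in W^*$ with $\|y\|=\max_{1\le k\le m}|\phi_k(y)|$ for all $y\in W$, and after discarding redundant functionals we may assume that no $\phi_k$ can be omitted from this formula. Irredundancy gives, for each $k$, a vector $y_0\in W$ with $|\phi_j(y_0)|\le 1$ for all $j\ne k$ but $|\phi_k(y_0)|>1$; rescaling $y_0$ by the scalar $\overline{\phi_k(y_0)}/|\phi_k(y_0)|^2$ produces $y_k\in W$ with $\phi_k(y_k)=1$ and $|\phi_j(y_k)|<1$ for every $j\ne k$, whence $\|y_k\|=1$ and $\|\phi_k\|=1$. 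Setting $y_k^*:=\phi_k$, conditions (i) and (iii) of property $\beta$ hold by construction, and (ii) holds with $\rho:=\max_{i\ne j}|y_i^*(y_j)|$ (read as $0$ if $m=1$), a maximum of finitely many numbers each strictly less than $1$, hence itself less than $1$. Thus $W$ has property $\beta$, and the proposition follows as described above.

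The one genuinely non-routine point is this reduction to an irredundant representation together with the resulting \emph{strict} inequalities: it is exactly the complex counterpart of the elementary fact that each facet of a polytope contains relative-interior points lying on no other facet, and it is what forces $\rho<1$ rather than merely $\rho\le 1$. Everything else --- the identification of property \Bk\ with property B for finite-dimensional targets, the validity of Proposition~\ref{prop-beta} in the complex case, and the closing appeal to Proposition~\ref{prop:general-principle-Bk-polyhedral} --- is bookkeeping entirely parallel to the real polyhedral case treated in \cite{Lindens} and in the corollary preceding the statement.
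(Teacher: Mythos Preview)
Your proof is correct and follows essentially the same route as the paper: reduce via Proposition~\ref{prop:general-principle-Bk-polyhedral} to showing that each finite-dimensional subspace $W$ has property $\beta$, then invoke Proposition~\ref{prop-beta}. The paper merely asserts in one sentence that the hypothesis on $W$ ``implies property $\beta$ also in the complex case'' and leaves the verification to the reader; you have spelled out that verification carefully (irredundant representation, construction of the biorthogonal pairs, finiteness forcing $\rho<1$), which is a welcome addition but not a different strategy.
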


It is easy to see that closed subspaces of $c_0$ satisfy this condition (see \cite[Proposition~III.3]{Godefroy}).

\begin{example}
Closed subspaces of real or complex $c_0$ with the approximation property have property \Bk.
\end{example}

The main limitation of Proposition~\ref{prop:general-principle-Bk-polyhedral} is that we only know few examples of finite-dimensional Banach spaces with property B. If we use property $\beta$ (equivalent here to polyhedrality), what we actually are requiring in that proposition is that all finite-dimensional subspaces have property B. To get more examples, we present the second general principle to get property \Bk, which appeared in \cite[Lemma~3.4]{JoWo}.

\begin{prop}[Johnson-Wolfe]\label{prop:suf-Bk-projections}
A Banach space $Y$ has property \Bk\, provided that there is a net of projections $\{Q_\lambda\}$ in $Y$, with $\sup_\lambda \|Q_\lambda\|<\infty$, converging to $\Id_Y$ in the strong operator topology (i.e.\ $Q_\lambda(y)\longrightarrow y$ in norm for every $y\in Y$), and such that $Q_\lambda(Y)$ has property \Bk\, for every $\lambda$.
\end{prop}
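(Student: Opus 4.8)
The plan is to run, on the range side, the same template that proves Proposition~\ref{prop-suficiente}. Fix a Banach space $Z$, an operator $T\in K(Z,Y)$, and $\varepsilon>0$; the goal is to produce $S\in K(Z,Y)\cap NA(Z,Y)$ with $\|T-S\|<\varepsilon$. Write $M=\sup_\lambda\|Q_\lambda\|<\infty$ and, for each index $\lambda$, put $Y_\lambda=Q_\lambda(Y)$. Since a bounded projection has closed range (indeed $Y_\lambda=\ker(\Id_Y-Q_\lambda)$), each $Y_\lambda$ is a Banach space under the norm inherited from $Y$, and by hypothesis it has property \Bk.

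The first step is to upgrade the strong-operator convergence $Q_\lambda\to\Id_Y$ to the norm convergence $Q_\lambda T\to T$ in $L(Z,Y)$. This is where compactness of $T$ and the uniform bound on the net are used: the set $L=\overline{T(B_Z)}$ is compact, and a uniformly bounded net of operators which converges pointwise converges uniformly on compact sets. Quantitatively, given $\eta>0$ cover $L$ by finitely many balls $B(y_1,\delta),\dots,B(y_m,\delta)$ with $(M+1)\delta<\eta/2$; once $\lambda$ is large enough that $\|Q_\lambda y_j-y_j\|<\eta/2$ for $j=1,\dots,m$, one gets $\|Q_\lambda y-y\|<\eta$ for all $y\in L$, hence $\|Q_\lambda T-T\|=\sup_{z\in B_Z}\|(Q_\lambda-\Id_Y)Tz\|\leq\sup_{y\in L}\|(Q_\lambda-\Id_Y)y\|\leq\eta$. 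So we may fix $\lambda$ with $\|Q_\lambda T-T\|<\varepsilon/2$.

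The second step invokes the hypothesis on the ranges. As $Q_\lambda T(B_Z)=Q_\lambda\bigl(T(B_Z)\bigr)$ is relatively compact and contained in the closed subspace $Y_\lambda$, the operator $Q_\lambda T$ belongs to $K(Z,Y_\lambda)$; since $Y_\lambda$ has property \Bk\ there is $S\in K(Z,Y_\lambda)\cap NA(Z,Y_\lambda)$ with $\|S-Q_\lambda T\|<\varepsilon/2$, all norms being computed in $L(Z,Y_\lambda)$. To finish, regard $S$ as an operator into $Y$: it is still compact; its operator norm is unchanged because the norm of $Y_\lambda$ is the restriction of that of $Y$; and if $\|S\|=\|Sz_0\|$ for some $z_0\in Z$ with $\|z_0\|=1$, the same equality holds with the $Y$-norm, so $S\in K(Z,Y)\cap NA(Z,Y)$. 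The same observation shows $\|S-Q_\lambda T\|$ does not depend on whether it is computed into $Y_\lambda$ or into $Y$, whence $\|T-S\|\leq\|T-Q_\lambda T\|+\|Q_\lambda T-S\|<\varepsilon$, which is what we wanted.

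I do not expect a genuine obstacle; the single delicate point is the first step — turning the strong-operator convergence of the net into actual norm convergence of $Q_\lambda T$ — and this is precisely where the compactness of $T$ and the uniform boundedness of $\{Q_\lambda\}$ are indispensable (without the uniform bound, pointwise convergence on a compact set need not be uniform). Everything else amounts to bookkeeping about passing between the subspace $Y_\lambda$ and the whole space $Y$.
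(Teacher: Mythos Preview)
Your argument is correct and follows exactly the route of the paper's sketch: first use compactness of $T$ together with the uniform bound on $\{Q_\lambda\}$ to turn strong convergence into norm convergence $Q_\lambda T\to T$, then approximate $Q_\lambda T$ inside $Q_\lambda(Y)$ using property \Bk\ there. You have simply expanded the details the paper leaves implicit, including the harmless but necessary remark that norms and norm-attainment are unchanged when passing between $Y_\lambda$ and $Y$.
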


\begin{proof}[Sketch of the proof]
Let $X$ be a Banach space and fix $T\in K(X,Y)$. The compactness of $T$ allows to show that $Q_\lambda T$ converges in norm to $T$. Therefore, it is enough to prove that each $Q_\lambda T$ can be approximated by norm attaining compact operators, but this is immediate since $Q_\lambda T$ arrives to the space $Q_\lambda(Y)$ which has property \Bk.
\end{proof}

In the above proposition, if we require the projections to have finite-dimensional range, the condition is an stronger form of the approximation property.

This result was used in \cite{JoWo} to show that (real or complex) isometric preduals of $L_1(\mu)$ spaces have property \Bk. Indeed, by a classical result of A.~Lazar and J.~Lindenstrauss, the projections $Q_\lambda$ can be chosen to have $\|Q_\lambda\|=1$ and $Q_\lambda(Y)\equiv \ell_\infty^{n_\lambda}$ (see \cite[Chapter 7]{Lacey} for instance).

\begin{corollary}[Johnson-Wolfe]
Every predual of a real or complex $L_1(\mu)$ space has property \Bk.
\end{corollary}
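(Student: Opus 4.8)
The plan is to derive the result directly from Proposition~\ref{prop:suf-Bk-projections}, feeding it the net of projections that comes from the structure theory of isometric $L_1$-preduals. First I would recall the relevant structural fact: if $Y$ is a Banach space with $Y^*$ isometrically isomorphic to $L_1(\mu)$ for some positive measure $\mu$, then, by the classical work of A.~Lazar and J.~Lindenstrauss (see \cite[Chapter~7]{Lacey}), $Y$ is the closed union of an upward directed family of finite-dimensional subspaces, and on each member $Y_\lambda$ of that family there is a norm-one projection $Q_\lambda\colon Y\longrightarrow Y$ with $Q_\lambda(Y)=Y_\lambda$ isometrically isomorphic to some $\ell_\infty^{n_\lambda}$; moreover, indexing the net by inclusion of the $Y_\lambda$'s, one has $Q_\lambda(y)\longrightarrow y$ in norm for every $y\in Y$, i.e. $(Q_\lambda)$ converges to $\Id_Y$ in the strong operator topology. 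The same statement is available in the complex case with the obvious adaptations.

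Next I would check that every range space $Q_\lambda(Y)\equiv \ell_\infty^{n_\lambda}$ has property \Bk. This is immediate: $\ell_\infty^{n_\lambda}$ has property $\beta$ with constant $\rho=0$, witnessed by the coordinate vectors $\{e_i\}_{i=1}^{n_\lambda}\subset S_{\ell_\infty^{n_\lambda}}$ and coordinate functionals $\{e_i^*\}_{i=1}^{n_\lambda}\subset S_{(\ell_\infty^{n_\lambda})^*}$, since $e_i^*(e_i)=1$, $e_i^*(e_j)=0$ for $i\neq j$, and $\|y\|_\infty=\max_{1\le i\le n_\lambda}|e_i^*(y)|$ for every $y$; this works in the real and in the complex case alike (it is exactly the polyhedrality remark attributed to Lindenstrauss \cite{Lindens} in the excerpt). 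By Proposition~\ref{prop-beta}, property $\beta$ implies property \Bk, so each $Q_\lambda(Y)$ has property \Bk.

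Finally I would invoke Proposition~\ref{prop:suf-Bk-projections}: we have a net $(Q_\lambda)$ of projections on $Y$ with $\sup_\lambda\|Q_\lambda\|=1<\infty$, converging to $\Id_Y$ in the strong operator topology, and such that each $Q_\lambda(Y)$ has property \Bk; hence $Y$ itself has property \Bk. The only non-routine ingredient is the Lazar--Lindenstrauss description of isometric $L_1$-preduals together with the existence of the contractive projections onto the finite-dimensional $\ell_\infty^n$-pieces converging strongly to the identity; once that classical theorem is cited, everything else is bookkeeping. The points deserving a line of care are the indexing of the net so that strong operator convergence genuinely holds (it suffices that the union of the $Y_\lambda$'s be dense and the net be directed by inclusion, which is part of the cited statement) and the availability of the representation theorem in the complex case.
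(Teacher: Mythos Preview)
Your proposal is correct and follows essentially the same approach as the paper: invoke the Lazar--Lindenstrauss structure theorem to obtain a net of norm-one projections with ranges isometric to $\ell_\infty^{n_\lambda}$, observe that these ranges have property~$\beta$ (hence property~\Bk), and apply Proposition~\ref{prop:suf-Bk-projections}. The paper's treatment is terser (it merely cites the structure theorem and states the conclusion), but the ingredients and their assembly are the same.
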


In particular,

\begin{example}[Johnson-Wolfe]
Real or complex $C_0(L)$ spaces have property \Bk.
\end{example}

Proposition~\ref{prop:suf-Bk-projections} also applied to \emph{real} $L_1(\mu)$ spaces. Indeed, it is enough to consider conditional expectations to finite collections of subsets of positive and finite measure, and use the density of simple functions in $L_1(\mu)$. Doing that, we get a net $(Q_\lambda)$ of norm-one projections converging to the identity in the strong operator topology such that $Q_\lambda(L_1(\mu))\equiv \ell_1^{n_\lambda}$. In the real case, $\ell_1^{m}$ is polyhedral, so it has property $\beta$; in the complex case, $\ell_1^m$ does not have property $\beta$ and it is not known whether it has property B.

\begin{example}[Johnson-Wolfe]
For every positive measure $\mu$, the \emph{real} space $L_1(\mu)$ has property \Bk.
\end{example}

The last familly of spaces with property \Bk\, that we would like to present here is the one of uniform algebras. The result recently appeared in \cite[R2 in p.~380]{CGK} and the proof is completely different from the previous ones in this chapter, as the authors do not use any kind of approximation property, but a nice complex version of Urysohn lemma constructed in the same paper. We recall that a \emph{uniform algebra} is a closed subalgebra of a complex $C(K)$ space that separates the points of $K$.

\begin{prop}[Cascales-Guirao-Kadets]
Every uniform algebra has property \Bk.
\end{prop}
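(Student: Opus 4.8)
The plan is to run the argument of Cascales, Guirao and Kadets, which, in contrast with the rest of this section, uses no approximation property at all but a \emph{Urysohn-type lemma for uniform algebras}. Fix a uniform algebra $A\subseteq C(K)$, let $\Gamma\subseteq K$ be its Choquet boundary, so that $\Gamma$ is a norming set ($\|f\|=\sup_{t\in\Gamma}|f(t)|$ for all $f\in A$) consisting of strong boundary points, and write $\delta_t\in A^*$ for evaluation at $t$. The lemma to be established first is: for every open $U\subseteq K$ with $U\cap\Gamma\neq\emptyset$ and every $\eta>0$ there are $g\in A$ and $t_0\in U\cap\Gamma$ with $g(t_0)=\|g\|=1$, with $|g(t)|<\eta$ for all $t\in K\setminus U$, and with $g$ taking values in a thin ``lens'' around the point $1$, say $|g(t)|+|1-g(t)|\leq 1+\eta$ for every $t\in K$. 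This lemma, proved in \cite{CGK} by iterating peak functions, is the technical heart of the whole proof.

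Granting it, fix a Banach space $X$, an operator $T\in K(X,A)$ with $\|T\|=1$, and $\varepsilon>0$; we must find a norm-attaining compact $S$ with $\|S-T\|<\varepsilon$. Choose small parameters $\delta,\eta>0$, to be tuned against $\varepsilon$ at the end. Since $\Gamma$ norms $A$, pick $x_0\in B_X$ and $t_1\in\Gamma$ with $|Tx_0(t_1)|>1-\delta$, and rotate $x_0$ so that $Tx_0(t_1)$ is a positive real, hence $Tx_0(t_1)>1-\delta$. Compactness of $T$ now enters essentially: $T^*$ is compact, so the map $t\mapsto T^*\delta_t$ is norm-continuous from $K$ into $X^*$, and we may take an open neighbourhood $U$ of $t_1$ on which both $|Tx_0(t)-Tx_0(t_1)|$ and $\|T^*\delta_t-T^*\delta_{t_1}\|$ stay below $\eta$. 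Apply the Urysohn lemma to this $U$ to get $g\in A$ and $t_0\in U\cap\Gamma$ as above; since $t_0\in U$ we have $\|T^*\delta_{t_0}\|\geq|Tx_0(t_0)|>1-\delta-\eta$, and Bishop--Phelps \cite{BishopPhelps} yields a norm-attaining $x_0^*\in X^*$, attaining its norm at some $z_0\in S_X$, with $\|x_0^*\|=\|T^*\delta_{t_0}\|$ and $\|x_0^*-T^*\delta_{t_0}\|<\delta$.

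Now imitate the proof of Proposition~\ref{prop-beta} (property $\beta$ implies property \Bk), with $g$ in the role of the peak vector $y_i$ and $\delta_{t_0}$ in the role of the peak functional $y_i^*$: put
$$
S x \;=\; T x + \Bigl[\,\bigl(1+\tfrac{\varepsilon}{2}\bigr)\,x_0^*(x)-(Tx)(t_0)\,\Bigr]\,g \qquad (x\in X).
$$
Then $S\in L(X,A)$, the operator $S-T$ has rank one so $S$ is compact, $\|S-T\|\leq\tfrac{\varepsilon}{2}+\delta<\varepsilon$ (exactly as in Proposition~\ref{prop-beta}, using $\|x_0^*-T^*\delta_{t_0}\|<\delta$ and $\|x_0^*\|\leq 1$), and $S^*\delta_{t_0}=(1+\tfrac{\varepsilon}{2})\,x_0^*$, which attains its norm at $z_0$. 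So everything reduces to the single inequality
$$
\|S^*\delta_t\|\;\leq\;\|S^*\delta_{t_0}\|\;=\;\bigl(1+\tfrac{\varepsilon}{2}\bigr)\|x_0^*\| \qquad\text{for all } t\in K,
$$
which says precisely that $S$ attains its norm at $z_0$.

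This last inequality is the main obstacle, and the place where the full strength of the Urysohn lemma is needed. The subtlety, absent in the genuine property $\beta$, is that $|g(t)|$ is not bounded away from $1$ by a uniform constant. One argues by splitting $K$: where $g(t)$ is far from $1$, the value $|g(t)|$ is controlled (by $\eta$ off $U$, and by the lens inequality on $U$) and the factor $1+\varepsilon/2$ manufactures a strict gap, just as in Proposition~\ref{prop-beta}; where $g(t)$ is close to $1$ — necessarily inside $U$, since $|g|<\eta$ off $U$ — the lens inequality forces $1-g(t)$ itself to be small, and the continuity of $t\mapsto T^*\delta_t$ gives $T^*\delta_t\approx T^*\delta_{t_0}\approx x_0^*$, so $S^*\delta_t$ is a scalar multiple of $x_0^*$ up to a controllably small error, the scalar having modulus at most $(1+\tfrac{\varepsilon}{2})\|x_0^*\|$. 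Choosing $\delta$ and $\eta$ small enough relative to $\varepsilon$ — and, if the error terms above must be made genuinely second order, replacing $g$ by a suitable power of $g$ (legitimate since $A$ is an algebra) — reconciles the two estimates and closes the argument. Equivalently, one may first establish the quantitative Bishop--Phelps--Bollob\'as statement of \cite{CGK} and deduce property \Bk\ from it; the difficulty sits in the same place.
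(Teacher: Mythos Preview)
The paper gives no proof of this proposition; it merely states the result and refers to \cite{CGK}, noting only that the argument there rests on a complex Urysohn-type lemma rather than any approximation property. So there is nothing in the paper to compare against, and your proposal is in effect a sketch of the Cascales--Guirao--Kadets proof itself.

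As such a sketch it is faithful. The ingredients you isolate are exactly the ones in \cite{CGK}: the peak function $g$ with the lens inequality $|g(t)|+|1-g(t)|\leq 1+\eta$; the observation that compactness of $T$ makes $T^*$ weak$^*$-to-norm continuous on bounded sets, hence $t\mapsto T^*\delta_t$ norm-continuous; the Bishop--Phelps correction of $T^*\delta_{t_0}$; and the rank-one perturbation patterned on the property-$\beta$ argument. Your identification of the inequality $\|S^*\delta_t\|\leq \|S^*\delta_{t_0}\|$ as the crux, and the case split (off $U$ use $|g(t)|<\eta$; on $U$ write $T^*\delta_t-g(t)T^*\delta_{t_0}=(1-g(t))T^*\delta_t+g(t)(T^*\delta_t-T^*\delta_{t_0})$ and combine the lens bound on $|1-g(t)|$ with $\|T^*\delta_t-T^*\delta_{t_0}\|<2\eta$), is the right strategy and can be made precise with honest bookkeeping of $\delta,\eta$ against $\varepsilon$.

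Two minor remarks. First, the hedge about replacing $g$ by a power of $g$ is unnecessary: the lens form of the Urysohn lemma already delivers what is needed, and taking powers would not sharpen the relevant estimate. Second, your closing sentence is accurate: in \cite{CGK} the route actually taken is to prove the quantitative Bishop--Phelps--Bollob\'as statement for compact operators into uniform algebras, from which property \Bk\ follows immediately; the direct argument you wrote and the BPB version differ only in packaging, not in substance.
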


\vspace*{0.2cm}

\noindent \textbf{Acknowledgment:\ } The author is grateful to Rafael Pay\'{a} for many fruitful conversations about the content of this paper. He also thanks Mar\'{\i}a Acosta, Joe Diestel, Ioannis Gasparis, Gilles Godefroy, Bill Johnson, Gin\'{e}s L\'{o}pez, Manuel Maestre and Vicente Montesinos for kindly answering several inquiries. 

\vspace*{0.2cm}

\end{document}